\DeclareMathOperator{\Eu}{Eu}
\DeclareMathOperator{\Vol}{Vol}
\DeclareMathOperator{\EVol}{EVol}
\DeclareMathOperator{\M}{M}
\DeclareMathOperator{\SM}{SM}
\DeclareMathOperator{\codim}{codim}
\DeclareMathOperator{\Grass}{Grass}
\DeclareMathOperator{\Conv}{Conv}
\DeclareMathOperator{\Osc}{Osc}
\DeclareMathOperator{\Ker}{Ker}
\DeclareMathOperator{\Ann}{Ann}
\DeclareMathOperator{\Coker}{Coker}
\theoremstyle{plain}
 \newtheorem{thm}{Theorem}
 \newtheorem{cor}[thm]{Corollary}
 \newtheorem{prp}[thm]{Proposition}
 \theoremstyle{definition}
 \newtheorem{eg}[thm]{Example}
  \newtheorem{rmk}[thm]{Remark}
\begin{document}

\author[R. Piene]{Ragni Piene}
\address{Department of Mathematics\\
University of Oslo\\ 
PO Box 1053 Blindern\\
NO-0316 Oslo, Norway}
\email{ragnip@math.uio.no}

\title{Higher order polar and reciprocal polar loci}

\begin{abstract}
In this note we introduce higher order polar  loci as natural generalizations of the classical polar loci, replacing the role of tangent spaces by that of higher order osculating spaces. The close connection between polar loci and dual varieties carries over to a connection between higher order polar loci and higher order dual varieties. We generalize the duality between the degrees of polar classes of a variety and those of its dual variety to varieties that are reflexive to a higher order. In particular, the degree of the  ``top'' (highest codimension) polar class of order $k$ is equal to the degree of the $k$th dual variety.
We define higher order Euclidean normal bundles and use them to define higher order reciprocal polar loci and classes. We give examples of how to compute the degrees of the higher order polar and reciprocal polar classes in some special cases: curves, scrolls, and toric varieties.
\vspace*{.5cm}

\end{abstract}

\maketitle

\centerline{\emph{To Bill Fulton on the occasion of his 80th birthday.}}
\bigskip

\section{Introduction}

The theory of polar varieties, or polar loci, has a long and rich history.  The terminology \emph{pole} and \emph{polar} goes back at least to Servois (1811) and Gergonne (1813). Poncelet's more systematic treatment of \emph{reciprocal polars} was presented in his  ``M\'emoire sur la th\'eorie g\'en\'erale des polaires r\'eciproques''  in 1824, though it did not appear in print until 1829. For a discussion of the -- at times heated -- debate between Poncelet and Gergonne concerning the principles of ``reciprocity'' versus that of ``duality'', see \cite{Bobillier}. Apparently, in the end, every geometer adopted both terms and used them interchangeably.
Less known, perhaps, is the work of Bobillier, who  (in 1828) was the first to replace the conic as  ``directrice'' by a curve of arbitrary degree $d$, so that the polar curve of a point is a curve of degree $d-1$, and the polar points of a line is the intersection of the polar curves of the points on the line, hence equal to $(d-1)^2$ points, the number of base points of the corresponding pencil. For a summary of this early history, see \cite{Bobillier} and \cite{MR846021}. 

In this note we introduce higher order polar  loci as natural generalizations of the classical polar loci. The close connection between polar loci and dual varieties carries over to a connection between higher order polar loci and higher order dual varieties. In particular, the degree of the  ``top'' (highest codimension) polar class of order $k$ is equal to the degree of the $k$th dual variety.

In a series of papers, Bank, Giusti, Heintz, Mbakop, and Pardo introduced what they called ``dual polar varieties'' and used them to find real solutions to polynomial equations. These loci were studied further in \cite{MR2397939} for real plane curves, and more generally in \cite{MR3335572}, under the name of ``reciprocal polar varieties.'' These variants of polar  loci are defined with respect to a quadric, in order to get a notion of orthogonality, and sometimes with respect also to a hyperplane at infinity. The orthogonality enables one to define \emph{Euclidean normal bundles}, as studied in \cite{MR1808617}, \cite{MR3451425}, and \cite{MR3335572}. For example, the (generic) Euclidean distance degree introduced in \cite{MR3451425} is the degree of  the ``top'' reciprocal polar class (see \cite{MR3335572}). Note that the definition of reciprocal polar loci given here differs slightly from the one given in 
 \cite{MR3335572}, but the degrees of the classes are the same.
 
 In the next section we recall the definition of the classical polar loci and their classes, and their relation to the Mather Chern classes. In the third section we define the higher order polar loci and their classes and discuss how the latter can be computed. In the case of a smooth, $k$-regular variety, the $k$th order polar classes can be expressed in terms of its Chern classes and the hyperplane class, and there are also other cases when it is possible to compute these classes. In the fourth section, we introduce the higher order Euclidean normal bundles and use them, in the following section, to define higher order reciprocal polar loci and classes. The three last sections give examples of how to compute the degrees of the higher order polar and reciprocal polar classes in some special cases: curves, scrolls, and toric varieties.
 
 \emph{Acknowledgments}. I am grateful to Patrick Popescu-Pampu for alerting me to the work of Bobillier and to the thesis \cite{Bobillier}. I thank the referee for asking a ``natural question,'' which is answered in Theorem \ref{degree}.

\section{Polar loci and Mather Chern classes}
Let $V$ be a vector space of dimension $n+1$ over an algebraically closed field of characteristic $0$.
The polar loci of an $m$-dimensional projective variety $X\subset \mathbb P(V)$ are defined as follows: Let $L_i\subset \mathbb P(V)$ be a linear subspace of codimension $m-i+2$. The \emph{$i$th polar locus} of $X$ with respect to $L_i$ is
 \[M_i(L_i):=\overline{\{P\in X_{\rm sm}\,|\, \dim (T_{X,P}\cap L_i)\ge i-1\}},\]
 where $T_{X,P}$ denotes the projective tangent space to $X$ at the (smooth) point $P$.  Note that $M_0(L_0)=X$.
 The rational equivalence class $[M_i(L_i)]$ is independent of $L_i$ for general $L_i$, and will be denoted $[M_i]$ and called the  \emph{$i$th polar class} of $X$ \cite[Prop.~(1.2), p.~253]{MR510551}. The classes $[M_i]$ are \emph{projective invariants} of $X$: the $i$th polar class of a (general) linear projection of $X$ is the projection of the $i$th polar class of $X$, and the $i$th polar class of a (general) linear section is the linear section of the $i$th polar class (see \cite[Thm. (4.1), p.~269; Thm. (4.2), p.~270]{MR510551}.
 
 Let us recall the definition of the Mather Chern class $c^{\M}(X)$ of an $m$-dimensional variety $X$. Let $\widetilde X\subseteq  \Grass_m (\Omega_X^1)$ denote the Nash transform of $X$, i.e., $\widetilde X$ is the closure of the image of the rational section $X\dashrightarrow \Grass_m (\Omega_X^1)$ given by the locally free rank $m$ sheaf $\Omega_X^1|_{X_{\rm sm}}$. 
The Mather Chern class of $X$ is $c^{\M}(X):=\nu_*(c(\Omega^\vee)\cap [\widetilde X])$, where $\Omega$ is the tautological sheaf on $\Grass_m (\Omega_X^1)$ and 
$\nu\colon \widetilde X\to X$. 

In 1978 we showed the following, generalizing the classical Todd--Eger formulas to the case of singular varieties:
 
 \begin{thm}\label{CP}\cite[Thm. 3]{MR1074588}
 The polar classes of $X$ are given by
  \begin{equation*}\label{CP1}
\textstyle [M_i]=\sum_{j=0}^i (-1)^{j}\binom{m-j+1}{m-i+1}h^{i-j}\cap c^M_{j}(X),
 \end{equation*}
 where $h:=c_1(\mathcal O_{\mathbb P^n}(1))$ is the class of a hyperplane.
 Vice versa, the Mather Chern classes of $X$ are given by
 \begin{equation*}\label{CP2}
 \textstyle c^{\M}_i(X)=\sum_{j=0}^i (-1)^{j}\binom{m-j+1}{m-i+1}h^{i-j}\cap [M_{j}].
 \end{equation*}
 \end{thm}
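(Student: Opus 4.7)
The plan is to lift the polar loci to the Nash transform $\nu\colon\widetilde X\to X$, where we have two natural exact sequences
\begin{equation*}
0\to \mathcal O_{\widetilde X}(-1)\to \mathcal E\to \Omega^\vee(-1)\to 0,\qquad 0\to \mathcal E\to V\otimes\mathcal O_{\widetilde X}\to \mathcal F\to 0.
\end{equation*}
Here $\mathcal E\subset V\otimes \mathcal O_{\widetilde X}$ is the rank-$(m+1)$ ``affine tangent'' sub-bundle (its fibre at a smooth $P\in X$ is the affine cone over $T_{X,P}\subset \mathbb P(V)$), $\mathcal F$ has rank $n-m$, and by the definition of the Mather Chern class $c^M_j(X)=\nu_*(c_j(\Omega^\vee)\cap [\widetilde X])$.

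\textbf{Step 1} (degeneracy locus realisation): a general $L_i$ of codimension $m-i+2$ corresponds to a linear subspace $W\subset V$ of dimension $n-m+i-1$, and the condition $\dim(T_{X,P}\cap L_i)\ge i-1$ at a smooth point translates to the composition $\sigma\colon W\otimes\mathcal O_{\widetilde X}\to V\otimes\mathcal O_{\widetilde X}\twoheadrightarrow \mathcal F$ dropping rank to at most $n-m-1$. This is a Thom--Porteous degeneracy of expected codimension $i$, whose class on $\widetilde X$ is, by the Kempf--Laksov formula, the Schur-type determinant $\Delta_{(1^i)}(c(\mathcal F))=\det[c_{1+b-a}(\mathcal F)]_{1\le a,b\le i}$. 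Hence
\begin{equation*}
[M_i]=\nu_*\bigl(\Delta_{(1^i)}(c(\mathcal F))\cap [\widetilde X]\bigr).
\end{equation*}

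\textbf{Step 2} (intrinsic expansion): from the second sequence $c(\mathcal F)=c(\mathcal E)^{-1}$, and from the first $c(\mathcal E)=(1-h)\cdot c(\Omega^\vee(-1))$. Using the tensor formula $c_k(E\otimes L)=\sum_{j}\binom{r-j}{k-j}c_j(E)\,c_1(L)^{k-j}$ for $E=\Omega^\vee$ (of rank $m$) and $L=\mathcal O(-1)$, one expands $\Delta_{(1^i)}(c(\mathcal F))$. After collecting terms by the power of $h$ and by $c_j(\Omega^\vee)$, repeated application of the Chu--Vandermonde identity collapses the multiple sum to
\begin{equation*}
\Delta_{(1^i)}(c(\mathcal F))=\sum_{j=0}^i (-1)^j\binom{m-j+1}{m-i+1}h^{i-j}\,c_j(\Omega^\vee),
\end{equation*}
and applying $\nu_*$ yields the first formula of the theorem.

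\textbf{Step 3} (inversion and main obstacle): the second formula is obtained by inverting the lower-triangular system of the first; the coefficient matrix $A_{ij}=(-1)^j\binom{m-j+1}{m-i+1}$ is involutive (when combined with the $h$-powers as displayed), which reduces to the Vandermonde identity $\sum_{j=k}^{i}(-1)^{i-j}\binom{m-j+1}{m-i+1}\binom{m-k+1}{m-j+1}=\delta_{ik}$. The main obstacle is the Chern-class bookkeeping in Step 2: expanding the Schur determinant $\Delta_{(1^i)}$, the tensor formula for $c(\Omega^\vee(-1))$, and the geometric series $(1-h)^{-1}$ produces a multiple sum of binomial products whose collapse to the single clean factor $(-1)^j\binom{m-j+1}{m-i+1}$ is the technical heart of the argument. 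A generating-function approach, writing $c(\mathcal F)=(1-h)^{-1}\,c(\Omega^\vee(-1))^{-1}$ as a single power series and extracting the coefficient of $h^{i-j}c_j(\Omega^\vee)$, is likely the cleanest route.
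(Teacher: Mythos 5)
Your route is essentially sound and is genuinely different in presentation from the source: the present paper does not reprove Theorem \ref{CP} at all (it quotes \cite[Thm.~3]{MR1074588}), and the underlying proof there runs through the identity $[M_i]=\nu_*(c_i(\mathcal P^1)\cap[\widetilde X])$ (the $k=1$ case of Theorem \ref{p-class}) together with the principal-parts sequence $0\to\Omega(1)\to\mathcal P^1\to\mathcal O(1)\to 0$, which makes the binomial bookkeeping a two-line computation. Your setup is precisely the dual of that: your affine tangent subbundle $\mathcal E\subset V_{\widetilde X}$ has $\mathcal E^\vee$ equal (generically, hence on a suitable modification) to the rank-$(m+1)$ osculating quotient $\mathcal P^1$, and dualizing your first sequence gives $0\to\Omega(1)\to\mathcal E^\vee\to\mathcal O(1)\to 0$. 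This also closes the step you left open. In Step 2 you assert, but do not prove, the collapse of the Schur determinant; you do not need Chu--Vandermonde at all. Since $c(\mathcal F)=c(\mathcal E)^{-1}=s(\mathcal E)$, the standard determinantal inversion between Chern and Segre classes gives
\[
\det\bigl[c_{1+b-a}(\mathcal F)\bigr]_{1\le a,b\le i}=\det\bigl[s_{1+b-a}(\mathcal E)\bigr]_{1\le a,b\le i}=(-1)^i c_i(\mathcal E)=c_i(\mathcal E^\vee),
\]
and then $c_i(\mathcal E^\vee)=c_i(\Omega(1))+h\,c_{i-1}(\Omega(1))$, so the coefficient of $c_j(\Omega)h^{i-j}$ is $\binom{m-j}{i-j}+\binom{m-j}{i-j-1}=\binom{m-j+1}{m-i+1}$, and $c_j(\Omega)=(-1)^jc_j(\Omega^\vee)$ produces exactly the stated formula. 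So the ``technical heart'' is a Pascal identity once the determinant is recognized as $c_i(\mathcal E^\vee)$; as written, your Step 2 is an unverified claim, which is the one real gap in the write-up, but it is a gap that closes cleanly.

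Two further points you should make explicit. First, Thom--Porteous by itself only yields a class supported on the degeneracy locus; to conclude $[M_i(L_i)]=\nu_*\bigl(\Delta_{(1^i)}(c(\mathcal F))\cap[\widetilde X]\bigr)$ you must argue that for general $L_i$ the locus on $\widetilde X$ has pure expected codimension $i$, is generically reduced, and maps birationally onto $M_i(L_i)$ (no components over the singular locus); this is exactly the generic-transversality argument of \cite[Prop.~(1.2), p.~253]{MR510551}, via Kleiman's theorem on general translates, and it is what the paper's Theorem \ref{p-class} also invokes. Second, your Step 3 is fine: the stated inversion identity is correct (your sign $(-1)^{i-j}$ differs from the natural $(-1)^{j-k}$ only by the constant factor $(-1)^{i+k}$, which is harmless since the sum vanishes off the diagonal), and it follows from $\binom{c}{b}\binom{b}{a}=\binom{c}{a}\binom{c-a}{b-a}$ and the binomial theorem.
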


 \section{Higher order polar loci}\label{three}
 Let $X\subset \mathbb P(V)$ be a projective variety of dimension $m$,
and $P\in X$ a general point.  There is a sequence of osculating spaces to $X$ at $P$:
\[\{P\}\subseteq T_{X,P}=\Osc_{X,P}^1 \subseteq \Osc_{X,P}^2 \subseteq \Osc_{X,P}^3\subseteq \cdots \subseteq \mathbb P(V),\]
defined via the sheaves of principal parts of $\mathcal O_X(1)$ as follows. Let $V_X:=V\otimes \mathcal O_X$ denote the trivial $(n+1)$-bundle on $X$, and
consider the $k$-jet map (see e.g. \cite[p.~492]{MR0506323})
\[j_k\colon V_X\to \mathcal P^k_X(1).\]
Let $X_{k-{\rm cst}}\subseteq X$ denote the open  dense where the rank of $j_k$ is constant, say equal to $m_k+1$. Then for $P\in X_{k-{\rm cst}}$, $\Osc_{X,P}^k=\mathbb P((j_k)_P(V))\subseteq \mathbb P(V)$.
Note that $m_0=0$, $m_1=m$, and  $\dim \Osc_{X,P}^k=m_k$.

Assume $m_k<n$. Let $L_{k,i}\subset \mathbb P(V)$  be a linear subspace of codimension $m_k-i+2$. 
The \emph{$i$th polar locus of order} $k$ of $X$ with respect to $L_{k,i}$ is
 \[M_{k,i}(L_{k,i}):=\overline{\{P\in X_{k-{\rm cst}}\,|\, \dim (\Osc^k_{X,P}\cap L_{k,i})\ge i-1\}}.\]
 Note that $M_{1,i}(L_{1,i})=M_{i}(L_{1,i})$ is the classical $i$th polar locus with respect to $L_{1,i}$, and that $M_{k,0}(L_{k,0})=X$ for all $k$.

 The $(m_k+1)$-quotient $V_{X_{k-{\rm cst}}} \to j_k(V_{X_{k-{\rm cst}}})$ gives a rational section 
 \[X \dashrightarrow \Grass_{m_k+1}(V_X)=X\times \Grass_{m_k+1}(V).\]
  Its closure $\widetilde X^k \subseteq X\times\Grass_{m_k+1}(V)$, together with the projection map
$\nu^k\colon \widetilde X^k \to X$, is called the \emph{$k$th Nash transform} of $X\subset \mathbb P(V)$. 
Note that $\nu^1=\nu\colon \widetilde X^1=\widetilde X \to X$ is the usual Nash transform. Let $V_{\widetilde X^k} \to \mathcal P^k$ denote the induced $(m_k+1)$-quotient.  We call $\mathcal P^k$ \emph{the $k$th order osculating bundle} of $X$.
The projection map $\varphi_k\colon \widetilde X^k \to \Grass_{m_k+1}(V)$ is the \emph{$k$th associated map} of $X\subset \mathbb P(V)$ (see \cite{MR0154293} and \cite{MR2940698}).

\begin{thm}\label{p-class}
The class of $M_{k,i}(L_{k,i})$ is independent of $L_{k,i}$, for general $L_{k,i}$, and is given by
 \[[M_{k,i}] =\nu^k_*( c_i(\mathcal P^k)\cap [\widetilde X^k]).\] 
 \end{thm}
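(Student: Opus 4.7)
The plan is to follow the template of the classical case ($k=1$): realize $M_{k,i}(L_{k,i})$, pulled back to the $k$th Nash transform $\widetilde X^k$, as the degeneracy locus of a morphism of vector bundles, and then apply the Thom--Porteous formula. The role of $\Omega_X^1$ in the classical argument is played here by the $k$th osculating bundle $\mathcal P^k$.

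First I would translate the defining geometric condition into linear algebra. A linear subspace $L_{k,i} \subset \mathbb P(V)$ of codimension $m_k - i + 2$ corresponds, via the standard identification of linear subspaces of $\mathbb P(V)$ with quotients of $V$, to a kernel $K \subset V$ of dimension $m_k - i + 2$. Since $\Osc_{X,P}^k$ is the projectivization of the rank-$(m_k+1)$ quotient $\mathcal P^k_P$, a direct count yields
$$\dim\bigl(\Osc_{X,P}^k \cap L_{k,i}\bigr) = (i-2) + \dim\bigl(\Ker(V \to \mathcal P^k_P) \cap K\bigr).$$
Hence the condition $\dim(\Osc_{X,P}^k \cap L_{k,i}) \geq i-1$ is equivalent to the composition
$$\phi_P \colon K \hookrightarrow V \twoheadrightarrow \mathcal P^k_P$$
being non-injective, i.e., having rank at most $m_k - i + 1$.

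Globalizing, the inclusion $K \hookrightarrow V$ and the universal quotient $V_{\widetilde X^k} \twoheadrightarrow \mathcal P^k$ on the $k$th Nash transform produce a morphism of bundles
$$\phi \colon K_{\widetilde X^k} \longrightarrow \mathcal P^k,$$
of source rank $m_k - i + 2$ and target rank $m_k + 1$. Over $(\nu^k)^{-1}(X_{k-{\rm cst}})$ the degeneracy locus $D_{m_k-i+1}(\phi) = \{\mathrm{rk}\,\phi \leq m_k - i + 1\}$ coincides with the preimage of $M_{k,i}(L_{k,i})$, and its expected codimension is $(1)(i) = i$.

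For general $L_{k,i}$, Kleiman's transversality theorem, applied to the transitive $\mathrm{PGL}(V)$-action on the Grassmannian parametrizing the possible kernels $K$, ensures that $D_{m_k-i+1}(\phi)$ has the expected codimension and that the Thom--Porteous formula computes its class. Since $K_{\widetilde X^k}$ is trivial, one has $c(\mathcal P^k - K_{\widetilde X^k}) = c(\mathcal P^k)$; and since the Porteous partition $(f-r)^{e-r} = (i)$ is a single row of length $i$, the Schur determinant collapses to a single Chern class, giving
$$[D_{m_k-i+1}(\phi)] = c_i(\mathcal P^k) \cap [\widetilde X^k].$$
Pushing forward along the birational map $\nu^k$ then yields $[M_{k,i}(L_{k,i})] = \nu^k_*(c_i(\mathcal P^k) \cap [\widetilde X^k])$, which is manifestly independent of $L_{k,i}$ for general choices. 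The only step requiring genuine care is Kleiman transversality for this particular family of degeneracy loci, so that the Porteous formula applies cleanly and no spurious excess-intersection contributions arise; the Porteous computation itself is then formal.
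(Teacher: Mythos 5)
Your proposal is correct and follows essentially the same route as the paper, which simply defers to the $k=1$ argument of \cite{MR510551}*{Prop.~(1.2)}: interpret the osculating condition as a rank-drop condition on $K_{\widetilde X^k}\to \mathcal P^k$ (equivalently, the preimage under the associated map $\varphi_k$ of a special Schubert variety), apply Kleiman transversality for general $L_{k,i}$, compute the class by Thom--Porteous/Giambelli as $c_i(\mathcal P^k)$, and push forward along the birational map $\nu^k$. The only point to make explicit in your transversality step is that, again by Kleiman applied over the boundary $\widetilde X^k\setminus(\nu^k)^{-1}(X_{k-{\rm cst}})$, no component of the degeneracy locus lies over that boundary and the locus is generically reduced, so its pushforward is exactly $[M_{k,i}(L_{k,i})]$ with multiplicity one.
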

 
 \begin{proof} The proof is similar to that of \cite[Prop. (1.2), p.~253]{MR510551}.
 \end{proof}
 We call $[M_{k,i}] $ the \emph{$i$th polar class of order $k$} of $X$.
 
 \begin{prp}
 Let $V' \subseteq V$ be a general subspace, with $\dim V'> m_k+1$, and let $f\colon X\to \mathbb P(V')$ denote the corresponding linear projection. Then the image of the
 $i$th polar class of order $k$ of $X\subset \mathbb P(V)$ is the same as the $i$th polar class of order $k$ of $f(X)\subset \mathbb P(V')$.
 \end{prp}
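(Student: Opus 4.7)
My approach is to use the intrinsic description of the higher-order polar class given by Theorem~\ref{p-class}, together with the projection formula, paralleling the classical case \cite[Thm.~(4.1)]{MR510551}.

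First, I would verify that for a general $V' \subseteq V$ with $\dim V' > m_k + 1$ (so $\dim V' \geq m + 2$), the projection $f\colon X \to \mathbb{P}(V')$ is a morphism that is birational onto its image $f(X)$. Writing $V = V' \oplus W$ for a generic complement, the center $\mathbb{P}(W) \subset \mathbb{P}(V)$ misses $X$ (since $\dim W + m \leq n$), and a standard general position argument shows $f$ is generically an embedding.

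Next, I would compare the Nash transforms. Let $\pi\colon V \twoheadrightarrow V'$ be the quotient by $W$, and for $P \in X_{k-{\rm cst}}$ let $\hat{O}_P \subseteq V$ be the affine cone of $\Osc^k_{X,P}$, of dimension $m_k + 1$. The inequality $\dim W + (m_k+1) \leq n+1$ is strict under the hypothesis, so for general $V'$ we have $W \cap \hat{O}_P = 0$ at a general point $P$; hence $\pi$ restricts to an isomorphism $\hat{O}_P \xrightarrow{\sim} \pi(\hat{O}_P) \subseteq V'$, and the image is the affine cone of $\Osc^k_{f(X), f(P)}$. In particular the $k$th osculating dimension of $f(X)$ equals $m_k$, and we obtain a birational map $g\colon \widetilde{X}^k \dashrightarrow \widetilde{f(X)}^k$ sending $(P, \hat{O})$ to $(f(P), \pi(\hat{O}))$ that commutes with the Nash projections and $f$. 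Sheaf-theoretically, the factorization $V'_X \hookrightarrow V_X \twoheadrightarrow \mathcal{P}^k$ of the $k$-jet map identifies $\mathcal{P}^k_X$ with $g^* \mathcal{P}^k_{f(X)}$, so $c_i(\mathcal{P}^k_X) = g^* c_i(\mathcal{P}^k_{f(X)})$.

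Combining these ingredients via Theorem~\ref{p-class}, the projection formula, and $g_*[\widetilde{X}^k] = [\widetilde{f(X)}^k]$ (since $g$ is birational) yields
\[
f_*[M_{k,i}(X)] = f_*\nu^k_{X,*}\bigl(c_i(\mathcal{P}^k_X) \cap [\widetilde{X}^k]\bigr) = \nu^k_{f(X),*}\bigl(c_i(\mathcal{P}^k_{f(X)}) \cap [\widetilde{f(X)}^k]\bigr) = [M_{k,i}(f(X))].
\]
The hard part will be the careful handling of the genericity of $V'$: in particular, verifying that the osculating cones meet $W$ trivially at general points and that $g$ extends to a well-behaved birational correspondence of the Nash transforms (which are closures in Grassmannian bundles). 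Both reduce to dimension counts that are clean under the hypothesis $\dim V' > m_k + 1$.
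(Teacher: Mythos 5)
Your key step is the identification $c_i(\mathcal P^k_X)=g^*c_i(\mathcal P^k_{f(X)})$ followed by the projection formula, and that is where the argument has a genuine gap. Since $g$ is only a rational map, one must pass to the closure $Z$ of its graph, with birational projections $a\colon Z\to \widetilde X^k$ and $b\colon Z\to \widetilde{f(X)}^k$; there the two pulled-back osculating bundles are canonically identified only on the open locus where the composite $V'_Z\to a^*\mathcal P^k$ is surjective, i.e.\ where the (limit) $k$th osculating space misses the center $\mathbb P(W)$. Under the stated hypothesis this bad locus is in general nonempty: the union of the (limit) osculating spaces has dimension up to $m+m_k$, so a general center of dimension $n-\dim V'$ avoids it only when $\dim V'>m+m_k$, which is strictly stronger than $\dim V'>m_k+1$ as soon as $m\ge 2$ (already for $k=1$, projecting a surface to $\mathbb P^3$ produces pinch points, over which the Nash/tangent data of $f(X)$ is not the projection of that of $X$). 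Consequently $a^*c_i(\mathcal P^k)\cap[Z]$ and $b^*c_i(\mathcal P^k_{f(X)})\cap[Z]$ can differ by a class supported over the bad locus, whose codimension in $X$ is only $\dim V'-m_k-1$; such a discrepancy is not excluded by dimension reasons precisely when $i\ge \dim V'-m_k-1$, which includes the top classes --- the case of main interest. So your ``clean dimension counts'' (osculating cones meeting $W$ trivially at \emph{general} points, birationality of $g$) establish agreement of the bundles over a dense open set, which does not yield equality of the pushed-forward capped classes; controlling the boundary contribution is the actual content of the proposition.

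The proof the paper intends (the order-$k$ analogue of \cite[Thm.~(4.1), p.~269]{MR510551}, just as Theorem~\ref{p-class} is the analogue of Prop.~(1.2) there) does not compare bundles globally; it works with the polar loci themselves. One takes a general $L'_{k,i}\subset\mathbb P(V')$ of codimension $m_k-i+2$ and compares $M_{k,i}(X;L_{k,i})$, where $L_{k,i}$ is the cone over $L'_{k,i}$ with vertex the center, with $M_{k,i}(f(X);L'_{k,i})$: since projection with center disjoint from $\Osc^k_{X,P}$ carries $\Osc^k_{X,P}$ to $\Osc^k_{f(X),f(P)}$ and intersections with the cone correspond to intersections with $L'_{k,i}$, the two loci agree up to sets coming from the bad locus, and transversality/dimension counts for a general center and general $L'_{k,i}$ show that such cones are still general enough to represent $[M_{k,i}]$, that no excess components arise from points whose limit osculating spaces meet the center, and that $f$ is birational (multiplicity one) on these loci. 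That is the step your sketch needs to supply; without it, the projection-formula chain is not justified.
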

 
 \begin{proof}
 The proof is similar to that of \cite[Thm. (4.1), p.~269]{MR510551}.
 \end{proof}
 
 In the case $k=1$, we proved \cite[Thm. (4.2), p.~270]{MR510551} the following result, which does not have an obvious generalization to the case $k\ge 2$.
 
 \begin{prp}
 Let $\mathbb P(W)\subset \mathbb P(V)$ be a (general) linear subspace of codimension $s$. Set $Y:=X\cap \mathbb P(W)$. For $0\le i\le m-s$, the $i$th polar class of $Y\subset \mathbb P(W)$ is equal to the intersection of the $i$th polar class of $X$ with $\mathbb P(W)$.
 \end{prp}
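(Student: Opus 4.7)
The plan is to adapt the argument of Theorem (4.2) of \cite{MR510551} by exhibiting a single linear subspace $L$ that computes both polar classes simultaneously. For a general $\mathbb{P}(W)\subset\mathbb{P}(V)$ of codimension $s$, Bertini gives that $Y=X\cap \mathbb{P}(W)$ has pure dimension $m-s$, that $Y_{\rm sm}=X_{\rm sm}\cap \mathbb{P}(W)$, and that $T_{Y,P}=T_{X,P}\cap \mathbb{P}(W)$ for every $P\in Y_{\rm sm}$. I would then choose a general linear subspace $L\subset \mathbb{P}(W)$ of codimension $(m-s)-i+2$ inside $\mathbb{P}(W)$; viewed in $\mathbb{P}(V)$, this $L$ has codimension $m-i+2$, which is exactly what is needed to define $M_i(X,L)$.

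The key pointwise observation is that since $L\subset \mathbb{P}(W)$,
\[ T_{Y,P}\cap L \;=\; T_{X,P}\cap \mathbb{P}(W)\cap L \;=\; T_{X,P}\cap L \]
for every $P\in Y_{\rm sm}$. This immediately gives the set-theoretic equality $M_i(Y,L)=M_i(X,L)\cap Y$ on $Y_{\rm sm}$, and, after closing up in $Y$, on all of $Y$.

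To pass to classes, I would invoke a standard genericity argument. The class $[M_i(X)]$ equals $[M_i(X,L')]$ for $L'$ in a Zariski-open subset $U_X$ of the Grassmannian of codimension $m-i+2$ subspaces of $\mathbb{P}(V)$, and likewise $[M_i(Y)]=[M_i(Y,L'')]$ for $L''$ in an open subset $U_Y$ of the corresponding Grassmannian in $\mathbb{P}(W)$. The incidence variety parametrizing flags $L\subset \mathbb{P}(W)\subset \mathbb{P}(V)$ dominates both Grassmannians, so a sufficiently generic pair $(\mathbb{P}(W),L)$ lands in $U_X\times U_Y$ and the chosen $L$ computes both classes. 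A further application of Bertini makes the intersection $M_i(X,L)\cap \mathbb{P}(W)$ generically transverse, so its class in $\mathbb{P}(W)$ agrees with the refined intersection $j^*[M_i(X)]$, where $j\colon Y\hookrightarrow X$ is the inclusion; combined with the previous set-theoretic equality this yields the asserted identity of classes.

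The hardest part will be packaging all the Bertini and genericity conditions into one coherent choice of $(\mathbb{P}(W),L)$; in particular, one must rule out that a component of $M_i(X,L)$ is contained in $\mathbb{P}(W)$ or that embedded components appear upon restriction to $Y$. These are standard dimension counts of the same flavour as in the $k=1$ case of \cite[Thm. (4.1)]{MR510551}, now carried out on the incidence variety of flags $L\subset \mathbb{P}(W)\subset \mathbb{P}(V)$, using that $\mathbb{P}(W)$ moves in a base-point-free system.
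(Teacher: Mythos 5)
Your reduction is fine as far as it goes: for a general $\mathbb P(W)$ one has $Y_{\rm sm}=X_{\rm sm}\cap\mathbb P(W)$ and $T_{Y,P}=T_{X,P}\cap\mathbb P(W)$, a general $L\subset\mathbb P(W)$ of codimension $(m-s)-i+2$ in $\mathbb P(W)$ has codimension $m-i+2$ in $\mathbb P(V)$, the pointwise identity $T_{Y,P}\cap L=T_{X,P}\cap L$ identifies the two loci on the good open set, and the flag-variety argument showing that such an $L$ also computes $[M_i(X)]$ is correct. The genuine gap is the final step. The claim that ``a further application of Bertini makes the intersection $M_i(X,L)\cap\mathbb P(W)$ generically transverse'' is not an application of Bertini or of Kleiman's transversality theorem: those results require the cycle and the linear space to move independently of one another, whereas here the cycle $M_i(X,L)$ is built from an $L$ that is constrained to lie inside $\mathbb P(W)$, so the locus you intersect and the linear space you cut with are correlated. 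Exactly this point --- properness of the intersection, absence of extra components of $M_i(X,L)\cap\mathbb P(W)$ supported over $X_{\rm sing}$ or over the locus where $\dim(T_{X,P}\cap L)$ jumps, and multiplicity one along every component of $M_i(Y,L)$ --- is the entire content of the statement, and the proposal defers it to ``standard dimension counts'' without indicating how such counts are to be made for a correlated pair. The obvious escape, representing $j^*[M_i(X)]$ by $M_i(X,L')$ with $L'$ general in $\mathbb P(V)$ (where Kleiman does apply), loses the set-theoretic identification with a polar locus of $Y$; reconciling the two representatives then amounts to specializing $L'$ into $\mathbb P(W)$, where limit components and multiplicities are precisely the issues set aside.

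Note also that the paper does not reprove this proposition: it quotes \cite[Thm.~(4.2), p.~270]{MR510551}, and the argument there is, in essence, carried out with the Chern-class description of the polar classes (as in Theorem \ref{p-class} for $k=1$, $[M_i]=\nu_*(c_i(\mathcal P^1)\cap[\widetilde X])$), comparing the Nash/blow-up construction and the relevant quotient bundle for $X$, restricted to a general linear section, with the corresponding construction for $Y$; for a general section the discrepancy is a trivial subbundle, so the Chern classes restrict correctly, and this replaces the transversality you would otherwise have to establish by hand. If you want a self-contained proof along your lines, you must actually prove the missing statement --- for instance by a local computation at a general point of each component of $M_i(Y,L)$, or by a dimension count on an incidence variety over the flag variety of pairs $L\subset\mathbb P(W)$ --- rather than invoke Bertini for it.
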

 
 Recall the definition of higher order dual varieties (introduced in \cite{MR713259}). 
 The points of the dual projective space $\mathbb P(V)^\vee:=\mathbb P(V^\vee)$ are the hyperplanes $H\subset \mathbb P(V)$.
 The \emph{$k$th order dual variety of $X$} is 
\[X^{(k)}:=\overline{\{ H\in \mathbb P(V)^\vee \,|\, H\supseteq \Osc^k_{X,P}, \text{ for some } P\in X_{k-{\rm cst}}}\}.\]
In particular, $X^{(1)}=X^\vee$ is the dual variety of $X$.

Set $ {\mathcal K}^k :=\Ker (V_{\widetilde X^k} \to \mathcal P^k)$; it is a $(n-m_k)$-bundle. Then
 $X^{(k)}\subset \mathbb P(V)^\vee$ is equal to the image of $\mathbb P(({\mathcal K}^k )^\vee)\subset \widetilde X^k\times \mathbb P(V)^\vee$ via the projection  on the second factor. Let
 $p: \mathbb P(({\mathcal K}^k )^\vee)\to X$ and $q: \mathbb P(({\mathcal K}^k )^\vee)\to X^{(k)}$  denote the projections.

\begin{prp}\label{pol}
 Let $L\subset \mathbb P(V)^\vee$  be a linear subspace of codimension $n-m_k+i-1$ and set $L_{k,i}:=L^\vee \subset \mathbb P(V)^{\vee \vee}=\mathbb P(V)$. Then
\[p(q^{-1}(X^{(k)}\cap L))=M_{k,i}(L_{k,i}).\]
\end{prp}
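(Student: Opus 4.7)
The plan is to reduce the asserted equality to a pointwise linear algebra statement via the duality between a linear subspace and its annihilator. First, since $q$ factors through $X^{(k)}$, I have $q^{-1}(X^{(k)}\cap L)=q^{-1}(L)$, so it suffices to show $p(q^{-1}(L))=M_{k,i}(L_{k,i})$.

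Next I would unpack the fibers of $p$. For $P\in X_{k\text{-cst}}$, the point $P$ corresponds to a unique point $\widetilde P\in \widetilde X^k$, and the fiber of $\mathbb P((\mathcal K^k)^\vee)\to \widetilde X^k$ over $\widetilde P$ is $\mathbb P(\mathcal K^k_{\widetilde P}{}^\vee)$. Since $\mathcal K^k_{\widetilde P}\subset V$ is precisely the annihilator (inside $V$) of the image $(j_k)_P(V)^\vee$, its projectivization parametrizes exactly those hyperplanes $H\subset\mathbb P(V)$ with $H\supseteq \Osc^k_{X,P}$. Dually, by definition of $L_{k,i}=L^\vee$, a hyperplane $H\in\mathbb P(V)^\vee$ lies in $L$ iff $H\supseteq L_{k,i}$.

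The core step is then a Grassmann-formula argument. Set $a:=\dim \Osc^k_{X,P}=m_k$ and $b:=\dim L_{k,i}=n-m_k+i-2$, so $a+b=n+i-2$. For any two projective subspaces,
\[
\dim\bigl(\Osc^k_{X,P}\cap L_{k,i}\bigr)\ge i-1
\ \Longleftrightarrow\
\dim\mathrm{span}\bigl(\Osc^k_{X,P},L_{k,i}\bigr)\le n-1,
\]
i.e.\ iff $\Osc^k_{X,P}$ and $L_{k,i}$ lie in a common hyperplane $H$. Combined with the previous step, this says: $P\in M_{k,i}(L_{k,i})\cap X_{k\text{-cst}}$ iff there exists $H\in L$ with $H\supseteq \Osc^k_{X,P}$, iff the fiber $p^{-1}(P)\cap q^{-1}(L)$ is nonempty, iff $P\in p(q^{-1}(L))$.

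Finally I would pass to closures. Both $M_{k,i}(L_{k,i})$ and $p(q^{-1}(L))$ are closed in $X$ (the latter because $p$ is proper, being a composition of a projective bundle projection with $\nu^k$), and the computation above shows they agree on the dense open $X_{k\text{-cst}}$, so they coincide. The only subtle point to watch is the identification of fibers of $p$ over the non-$k$-constant locus, but since $M_{k,i}(L_{k,i})$ is defined as a closure from $X_{k\text{-cst}}$, this does not affect the argument; I expect this is the only place where minor care is needed.
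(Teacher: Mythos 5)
Your argument is correct and is essentially the paper's own proof: both rest on the same two observations, namely that $H\supseteq L_{k,i}=L^\vee$ if and only if $H\in L$, and that $\dim(\Osc^k_{X,P}\cap L_{k,i})\ge i-1$ (with $\dim L_{k,i}=n-m_k+i-2$) holds exactly when $\Osc^k_{X,P}$ and $L_{k,i}$ fail to span $\mathbb P(V)$, i.e.\ lie in a common hyperplane, which you phrase via fibers of $p,q$ and the Grassmann formula while the paper argues the two inclusions directly. The closure issue over $X\setminus X_{k\text{-cst}}$ that you flag at the end is handled at the same level of (generic) care in the paper, which likewise passes over points of $\mathbb P((\mathcal K^k)^\vee)$ lying above the non-$k$-constant locus.
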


\begin{proof}
 If $L$ is general, then so is $L_{k,i}$. The dimension of $L_{k,i}$ is $n-1-\dim L=n-1-(m_k-i+1)=n-m_k+i-2$, hence
the codimension of $L_{k,i}$ is $m_k-i+2$. If $H\in X^{(k)}\cap L$, then there is a $P\in X_{k-{\rm cst}}$ such that $(P,H)\in \mathbb P(({\mathcal K}^k )^\vee)$. Hence $\Osc^k_{X,P}\subseteq H$ and 
$L_{k,i}\subseteq H$, so the intersection $\Osc^k_{X,P}\cap L_{k,i}$ has dimension $\ge i-1$. Therefore $P\in M_{k,i}(L_{k,i})$. Conversely, if $P\in M_{k,i}(L_{k,i})$, then $\dim \Osc^k_{X,P}\cap L_{k,i} \ge i-1$, so that  $\Osc^k_{X,P}$ and  $L_{k,i}$ do not span $\mathbb P(V)$. Hence there is a hyperplane $H$ that contains both these spaces, and so $H\in X^{(k)}\cap L$.
\end{proof}

The ``expected dimension'' of $X^{(k)}$ is equal to the dimension of $ \mathbb P(({\mathcal K}^k )^\vee)$, which is $m+n-m_k-1$. Let $\delta_k:=m+n-m_k-1-\dim X^{(k)}$ denote the \emph{$k$th dual defect} of $X$. Let $\overline{X}\subset \mathbb P(V)\times \mathbb P(V)^\vee$ denote the image of   $\mathbb P(({\mathcal K}^k )^\vee)$, and let $\overline{X^{(k)}}\subset \mathbb P(V)^\vee\times \mathbb P(V) \cong \mathbb P(V)\times \mathbb P(V)^\vee$ denote the corresponding variety constructed for $X^{(k)}$. 
It was shown in \cite[Prop.~1, p.~336]{MR713259}  that $\overline{X}\subseteq \overline{X^{(k)}}$, so that equality  holds iff their dimensions are equal. In this case, we say that $X$ is \emph{$k$-reflexive}, and we then
have $X=(X^{(k)})^{(k)}$. For example, a non-degenerate \emph{curve}  $X\subset \mathbb P(V)$ is $(n-1)$-reflexive
 (see Section \ref{curves}).

\begin{thm}\label{degree}
Assume $X$ is $k$-reflexive. Then the degree of the $i$th polar class of order $k$ of $X^{(k)}$ is given by
\[\deg [M^\vee_{k,i}]=\deg [M_{k,m-\delta_k-i}].\]
In particular, the degree of $X^{(k)}$ is equal to the degree of the $(m-\delta_k)$th polar class $[M_{k,m-\delta_k}]$ of order $k$ of $X$.
\end{thm}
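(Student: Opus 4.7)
The plan is to express both sides of the asserted equality as a single intersection number on the incidence variety $I := \mathbb P((\mathcal K^k)^\vee)\subset \widetilde X^k\times \mathbb P(V)^\vee$. Write $p\colon I\to X$ for the natural projection to $\widetilde X^k$ followed by $\nu^k$, and $q\colon I\to X^{(k)}$ for the projection to the second factor (whose image is $X^{(k)}$). Let
\[\alpha := p^*c_1(\mathcal O_{\mathbb P(V)}(1)),\qquad \beta := q^*c_1(\mathcal O_{\mathbb P(V)^\vee}(1))\]
be the two pulled-back hyperplane classes on $I$. The hypothesis of $k$-reflexivity gives $\overline X = \overline{X^{(k)}}$, so $I$ is simultaneously the projective bundle built from $X^{(k)}\subset \mathbb P(V)^\vee$ with the roles of $p,q$ interchanged.

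The first step is to reinterpret Theorem \ref{p-class} as a pushforward from $I$. From the exact sequence $0\to\mathcal K^k\to V_{\widetilde X^k}\to\mathcal P^k\to 0$ with $V_{\widetilde X^k}$ trivial, one has $c_i(\mathcal P^k) = s_i(\mathcal K^k)$. The standard projective-bundle formula for Segre classes, together with the identification of the tautological line bundle on $I$ with the restriction of $\mathcal O_{\mathbb P(V)^\vee}(1)$, yields
\[[M_{k,i}] = p_*\bigl(\beta^{n-m_k-1+i}\cap [I]\bigr).\]
Taking the degree in $\mathbb P(V)$ and applying the projection formula,
\[\deg[M_{k,i}] = \int_I \alpha^{m-i}\cdot \beta^{n-m_k+i-1}.\]

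The second step is to apply the same reformulation to $X^{(k)}$ using $k$-reflexivity. Let $m'=\dim X^{(k)} = m+n-m_k-1-\delta_k$ and let $m'_k$ denote the dimension of a general $k$th osculating space of $X^{(k)}$. Since $I$ has two projective-bundle structures with $\dim I = m+n-m_k-1 = m'+n-m'_k-1$, one obtains $m'_k = n-1-\delta_k$; the dual analogue of the previous formula becomes
\[\deg[M_{k,j}^\vee] = \int_I \beta^{m'-j}\cdot \alpha^{n-m'_k+j-1} = \int_I \alpha^{\delta_k+j}\cdot \beta^{m'-j}.\]

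The theorem now follows from a short exponent match. Setting $j=i$ on the dual side gives $\int_I \alpha^{\delta_k+i}\cdot\beta^{m'-i}$, while substituting $i'=m-\delta_k-i$ into the formula for $\deg[M_{k,i'}]$ gives $\int_I \alpha^{\delta_k+i}\cdot\beta^{n-m_k+m-\delta_k-i-1} = \int_I \alpha^{\delta_k+i}\cdot\beta^{m'-i}$, the same number. The ``in particular'' clause is the case $i=0$: one has $[M_{k,0}^\vee] = [X^{(k)}]$, so the duality specializes to $\deg X^{(k)} = \deg[M_{k,m-\delta_k}]$. The main obstacle I anticipate is the bookkeeping underlying the first step, namely identifying the tautological $\mathcal O_I(1)$ with $q^*\mathcal O_{\mathbb P(V)^\vee}(1)$ and ensuring that the projective-bundle pushforward produces the Chern class $c_i(\mathcal P^k)$ with the correct exponent and sign under the conventions used here; once this and the reflexivity-driven identification $m'_k = n-1-\delta_k$ are in place, everything else is routine dimension bookkeeping.
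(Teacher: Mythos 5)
Your proof is correct and follows essentially the same route as the paper: both express $\deg[M_{k,i}]$ and $\deg[M^\vee_{k,i}]$ as intersection numbers $\int \alpha^a\beta^b$ of the two pulled-back hyperplane classes on the incidence variety $\mathbb P((\mathcal K^k)^\vee)$ (equivalently $\overline X$), use $k$-reflexivity to identify it with the corresponding variety for $X^{(k)}$ (which is exactly where $m_k^\vee=n-1-\delta_k$ comes from), and match exponents. The only cosmetic difference is that you obtain the pushforward formula $[M_{k,i}]=p_*(\beta^{n-m_k+i-1}\cap[I])$ from Theorem \ref{p-class} via $c_i(\mathcal P^k)=s_i(\mathcal K^k)$ and the projective-bundle formula, whereas the paper reads it off directly from Proposition \ref{pol}; these are equivalent.
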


\begin{proof}
For $k=1$, this is \cite[Thm.~(4), p.~189]{MR846021}, where it follows immediately from the definition of the ranks (corresponding to the degrees of the polar classes). Here we use Proposition \ref{pol}: if $h$ and $h^\vee$ denote the hyperplane classes of $\mathbb P(V)$ and $\mathbb P(V)^\vee$ respectively, then the class $[M_{k,m-\delta_k-i}]$ is the pushdown to $X$ of the class $(h^\vee)^{n-m_k+m-\delta_k-i-1}\cap [\overline X]$. By definition, $\delta_k=m+n-1-m_k-m^\vee$, where $m^\vee:=\dim X^{(k)}$, so this is the same as $(h^\vee)^{m^\vee -i}\cap [\overline X]$. Hence its degree is the degree of $h^{n-1-m_k^\vee+i}(h^\vee)^{m^\vee -i}\cap [\overline X]$, where $m_k^\vee$ denotes the dimension of a general $k$th osculating space of $X^{(k)}$. Similarly, the class $[M^\vee_{k,i}]$  is the pushdown to $X^{(k)}$ of the class $h^{n-m_k^\vee + i -1}\cap [\overline{X^{(k)}}]=h^{n-m_k^\vee + i-1}\cap [\overline{X}]$ and has degree
equal to the degree of $(h^\vee)^{m^\vee-i}h^{n-1-m_k^\vee+i}\cap [\overline X]$.

Note that $i=m-\delta_k$ is the largest $i\le m$ such that $[M_{k,i}]\ne 0$.
\end{proof}

\medskip

Assume $X\subset \mathbb P(V)$ is generically $k$-regular, i.e., the map $j_k\colon V_X\to \mathcal P^k_X(1)$ is generically surjective. The \emph{$k$th  Jacobian ideal} $\mathcal J_k$ is the $\binom{m+k}{k}$th Fitting ideal of $\mathcal P^k_X(1)$ \cite[(2.9)]{MR2940698}. Note that $\mathcal J_1=F^{m+1}(\mathcal P^1_X(1))=F^{m+1}(\mathcal P^1_X)=F^m(\Omega^1_X)$ is the ordinary Jacobian ideal. Let $\pi_k\colon X^k\to X$ denote the blow-up of $\mathcal J_k$. Then, by \cite[5.4.3]{MR0308104}, setting $\mathcal A_k:=\Ann_{\pi_k^*\mathcal P^k_X(1)}(F^{\binom{m+k}k}(\pi_k^*\mathcal P^k_X(1)))$, $\pi_k^*\mathcal P^k_X(1)/\mathcal A_k$ is a $\binom{m+k}k$-bundle.
Let $\mathcal I_k$ denote the $0$th Fitting ideal of the cokernel of the map $V_{X^k}\to \pi_k^*\mathcal P^k_X(1)/\mathcal A_k$ and $\overline \pi_k\colon \overline X^k\to X^k$ the blow-up of $\mathcal I_k$. Then  
 \cite[Lemma (1.1), p.252]{MR510551} the image $\overline{ \mathcal P}^k$ of $\overline V_{\overline X^k}$ in $\overline \pi_k\pi_k^*\mathcal P^k_X(1)/\mathcal A_k$ is a $\binom{m+k}k$-bundle. Hence we get a $\binom{m+k}k$-quotient $V_{\overline X^k} \to \overline{ \mathcal P}^k$ which agrees with $V_X\to \mathcal P^k_X(1)$ above $X_{k-{\rm cst}}$. 
 Note that, as discussed in the case $k=1$ in \cite[p.~255]{MR510551}, the map $\pi_k\circ \overline \pi_k\colon\overline X^k\to X$ factors via the $k$th Nash transform $\nu^k\colon \widetilde X^k\to X$. In particular, we have 
 \[ [M_{k,i}]=\pi_{k*}\overline \pi_{k*}(c_i(\overline{ \mathcal P}^k)\cap [\overline X^k]).\]

In some cases, the Chern classes of $\overline{ \mathcal P}^k$ can be computed in terms of the Chern classes of $\mathcal P^k_X(1)$ and the invertible sheaves $\mathcal J_k\mathcal O_{\overline X^k}$ and $\mathcal I_k\mathcal O_{\overline X^k}$. We shall see in Section \ref{curves} that this is the case when $X$ is a curve. Another case is the following.

\begin{prp}
Assume $X\subset \mathbb P(V)$ is smooth and generically $k$-regular, and that $m_k=n-1$. Then
\[[M_{k,i}]=c_1(\mathcal P^k_X(1))^i\cap [X] -\sum_{j=0}^{i-1} \binom{i}j c_1(\mathcal P^k_X(1))^j\cap s_{m-i+j}(I_k,X),\]
where $s_{m-i+j}(I_k,X)=-\overline \pi_{k*}(c_1(\mathcal I_k\mathcal O_{\overline X^k})^{i-j}\cap [\overline X^k])$ denote the Segre classes of the subscheme $I_k\subset X$ defined by the ideal $\mathcal I^k:= F^0(\Coker j^k)$.
\end{prp}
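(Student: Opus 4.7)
Since $X$ is smooth, $\mathcal P^k_X(1)$ is locally free of rank $\binom{m+k}k$, and generic $k$-regularity forces this rank to equal $m_k+1=n$. Consequently the Jacobian ideal $\mathcal J_k$ is the unit ideal, so $\pi_k$ is the identity and $\overline\pi_k\colon\overline X^k\to X$ is simply the blow-up of $\mathcal I_k$. The image $\overline{\mathcal P}^k$ of $V_{\overline X^k}\to\overline\pi_k^*\mathcal P^k_X(1)$ is then a rank-$n$ sub-bundle, so the kernel of the induced surjection $V_{\overline X^k}\to\overline{\mathcal P}^k$ is a line bundle $\overline{\mathcal K}^k$. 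Because $V_{\overline X^k}$ is trivial, $c(\overline{\mathcal P}^k)=c(\overline{\mathcal K}^k)^{-1}$; combined with $c_1(\overline{\mathcal K}^k)=-c_1(\overline{\mathcal P}^k)$, this yields the key identity $c_i(\overline{\mathcal P}^k)=c_1(\overline{\mathcal P}^k)^i$ for every $i$, reducing the problem to computing $c_1(\overline{\mathcal P}^k)$.

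Passing to the top exterior power, $\bigwedge^n j^k\colon\bigwedge^n V_X\to\det\mathcal P^k_X(1)$ is given locally by the $n\times n$ minors of a matrix representing $j^k$; by definition these minors generate $\mathcal I_k$, so the image of $\bigwedge^n j^k$ is $\mathcal I_k\cdot\det\mathcal P^k_X(1)$. Since $\overline{\mathcal P}^k\hookrightarrow\overline\pi_k^*\mathcal P^k_X(1)$ is a full-rank sub-bundle, $\det\overline{\mathcal P}^k$ coincides with the image of $\bigwedge^n V_{\overline X^k}\to\overline\pi_k^*\det\mathcal P^k_X(1)$, so
\[\det\overline{\mathcal P}^k\;\cong\;\overline\pi_k^*\det\mathcal P^k_X(1)\otimes\mathcal I_k\mathcal O_{\overline X^k},\qquad c_1(\overline{\mathcal P}^k)=\overline\pi_k^*c_1(\mathcal P^k_X(1))+c_1(\mathcal I_k\mathcal O_{\overline X^k}).\]

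To conclude, expand $c_1(\overline{\mathcal P}^k)^i$ by the binomial theorem, cap with $[\overline X^k]$, and push forward by $\overline\pi_k$. The projection formula pulls each factor $c_1(\mathcal P^k_X(1))^j$ out of the pushforward; the term $j=i$ contributes $c_1(\mathcal P^k_X(1))^i\cap [X]$, while for $j<i$ the residual pushforward equals $-s_{m-i+j}(I_k,X)$ by the Segre-class convention stated in the proposition, giving precisely the asserted formula. The main obstacle is the identification of $\det\overline{\mathcal P}^k$ with $\overline\pi_k^*\det\mathcal P^k_X(1)\otimes\mathcal I_k\mathcal O_{\overline X^k}$: one must verify that the image of $\bigwedge^n j^k$ is exactly the Fitting-ideal multiple of $\det\mathcal P^k_X(1)$, and that after blowing up it still agrees with the determinant of the saturated sub-bundle $\overline{\mathcal P}^k$. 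Once that point is settled, the remaining Chern-class bookkeeping and the projection formula complete the argument routinely.
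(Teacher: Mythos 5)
Your proposal is correct and follows essentially the same route as the paper's proof: since the kernel $\overline{\mathcal K}^k$ is a line bundle one gets $c_i(\overline{\mathcal P}^k)=c_1(\overline{\mathcal P}^k)^i$, the determinant identification $\bigwedge^n\overline{\mathcal P}^k\cong\overline\pi_k^*\bigwedge^n\mathcal P^k_X(1)\otimes\mathcal I_k\mathcal O_{\overline X^k}$ gives $c_1(\overline{\mathcal P}^k)=\overline\pi_k^*c_1(\mathcal P^k_X(1))+c_1(\mathcal I_k\mathcal O_{\overline X^k})$, and the binomial expansion plus projection formula yields the stated formula. The only difference is that you spell out details the paper leaves implicit (that smoothness makes $\pi_k$ the identity, and the Fitting-ideal/minors justification of the determinant isomorphism, which the paper takes from its earlier work), and these details are accurate.
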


\begin{proof}
Since  $m_k=n-1$, the (locally free) kernel $\overline {\mathcal K}^k:=\Ker (V_{\overline X^k} \to \overline {\mathcal P}^k)$ has rank 1,  hence $c_i(\overline {\mathcal P}^k)=c_1(\overline {\mathcal P}^k)^i$ holds. We also have $\bigwedge^n \overline{\mathcal P}^k \cong \bigwedge^n \overline{\pi}_k^*\mathcal P^k_X(1) \otimes \mathcal I^k \mathcal O_{\overline X^k}$, hence $c_1(\overline{\mathcal P}^k)=c_1(\overline{\pi}_k^*\mathcal P^k_X(1)) +c_1(\mathcal I^k \mathcal O_{\overline X^k})$, so the result follows by applying the projection formula.
\end{proof}

\section{Higher order Euclidean normal bundles}

 Let $V$ and $V'$ be vector spaces of dimensions $n+1$ and $n$, and $V\to V'$ a surjection. 
  Set $H_\infty := \mathbb P(V')\subset \mathbb P(V)$, and call it the \emph{hyperplane at infinity}.
A non-degenerate quadratic form on $V'$ defines an isomorphism $V'\cong (V')^\vee$ and a non-singular quadric $Q_\infty\subset H_\infty$. 
\medskip

Let $L':=\mathbb P(W)\subset \mathbb P(V')$ be a linear space, and set 
\[K:=\Ker ({V'}^\vee \cong V' \to W)\subset {V'}^\vee.\]
 The \emph{polar} of $L'$ with respect to $Q_\infty$ is the linear space ${L'}^\perp:=\mathbb P(K^\vee) \subset \mathbb P(V')$.
 \medskip

Given a linear space $L\subset \mathbb P(V)$ and a point $P\in L$, define the  \emph{orthogonal space to $L$ at $P$} by
\[L_P^\perp := \langle P, (L\cap H_\infty)^\perp \rangle.\]
If $L\nsubseteq H_\infty$, the dimension of $(L\cap H_\infty)^\perp$ is equal to $n-1 -(\dim L -1)-1=n-\dim L -1$, and the dimension of 
$L_P^\perp$ is $n-\dim L $ if $P\notin (L\cap H_\infty)^\perp$ and $n-\dim L -1$ if $P\in (L\cap H_\infty)^\perp$. Note that if $P\in (L\cap H_\infty)^\perp$, then $P\in L\cap H_\infty$ and hence $P\in P^\perp$. This implies that $P\in Q_\infty$ and $L\cap H_\infty \subseteq T_{Q_\infty,P}$.

\medskip

Let $X\subset \mathbb P(V)$ be a variety of dimension $m$. Assume $X\nsubseteq H_\infty$. Let $P\in X$ be a non-singular point. The \emph{Euclidean normal space} to $X$ at $P$ with respect to $H_\infty$ and $Q_\infty$ is the linear space $N_{X,P}:= T_{X,P}^\perp$. Let $X_1\subseteq X$ denote the set of non-singular points $P$ such that $T_{X,P}\nsubseteq H_\infty$, and such that $T_{X,P}\cap H_\infty \nsubseteq T_{Q_\infty,P}$ if $P\in Q_\infty$. Then $\dim N_{X,P} = n-m$ for $P\in X_1$.
\medskip

By replacing tangent spaces by higher order osculating spaces we may define higher order Euclidian normal spaces. Let $k\ge 1$ be such that the dimension $m_k$ of a general $k$th order osculating space to $X$ is less than $n$. For $P\in X_{k-{\rm cst}}$, set 
\[N_{X,P}^k := (\Osc_{X,P}^k)^\perp.\]
Let $X_k\subseteq X_{k-{\rm cst}}$ denote the set of points $P$ such that $\Osc^k_{X,P}\nsubseteq H_\infty$, and such that $\Osc^k_{X,P}\cap H_\infty \nsubseteq T_{Q_\infty,P}$ if $P\in Q_\infty$. Then $\dim N^k_{X,P} = n-m_k$ for $P\in X_k$.

\medskip

With notations as in Section \ref{three}, consider the exact sequence
\[0\to \mathcal K^k \to V_{\widetilde X^k} \to \mathcal P^k \to 0.\]
Assume the hyperplane $H_\infty =\mathbb P(V')\subset \mathbb P(V)$ is \emph{general}.
  Then it follows from \cite[Lemma (4.1), p.~483]{MR0506323} that the dual  $V_{\widetilde X^k}'^\vee\to (\mathcal K^k)^\vee$ of the composed map $\mathcal K^k\to V_{\widetilde X^k} \to V'_{\widetilde X^k}$ is surjective.
  
\medskip

Set
  \[\mathcal X_k:=\overline{\{(P,P') | P\in X_k, P'\in N^k_{X,P} \}}\subset X\times \mathbb P(V).\]
 Let $p_k:\mathcal X_k\to X$ and $q_k:\mathcal X_k\to \mathbb P(V)$ denote the projections. The dimension of $\mathcal X_k$ is $n-m_k+m$. 
 The sheaf $p_{k*}q_k^*\mathcal O_{\mathbb P(V)}(1)$ is generically locally free, with rank $n-m_k+1$. Let $\nu_k:\widetilde X_k \to X$ be the ``Nash transform'' such that $\nu_k^*p_{k*}q_k^*\mathcal O_{\mathbb P(V)}(1)$ admits a locally free rank $n-m_k+1$ quotient bundle $\mathcal E^k$. By the definition of the normal spaces, it follows that $\mathcal E^k$ restricted to $X_k$ splits as $(\mathcal K^k)^\vee |_{X_k} \oplus \mathcal O_{X_k}(1)$, and that,
 by replacing  if necessary $\nu^k:\widetilde X^k \to X$ and $\nu_k: \widetilde X_k \to X$ by a further Nash transform (by abuse of notation, also denoted $\nu^k:\widetilde X^k\to X$), the map $V_{\widetilde X^k} \to  (\mathcal K^k)^\vee  \oplus \nu^{k*}\mathcal O_X(1)$ factors via the surjection $V_{\widetilde X^k} \to \mathcal E^k$. We call $\mathcal E^k$ the \emph{$k$th Euclidean normal bundle} to $X$. The difference between $\mathcal E^k$ and $(\mathcal K^k)^\vee  \oplus \nu^{k*}\mathcal O_X(1)$ measures where $V_{\widetilde X_k} \to  (\mathcal K^k)^\vee  \oplus \nu^{k*}\mathcal O_X(1)$ is not surjective.
 \medskip

  \begin{prp}\label{surjective}
 For a generic choice of a hyperplane $H_\infty$ and a quadric $Q_\infty\subset H_\infty$, the $k$th Euclidean normal bundle $\mathcal E^k$ splits as $(\mathcal K^k)^\vee \oplus \nu^{k*}\mathcal O_X(1)$  on an appropriate Nash modification $\nu^k:\widetilde X^k\to X$.
 \end{prp}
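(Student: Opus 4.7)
The plan is to show that the canonical factorization $V_{\widetilde X^k} \twoheadrightarrow \mathcal E^k \to (\mathcal K^k)^\vee \oplus \nu^{k*}\mathcal O_X(1)$ noted just before the proposition has its second arrow an isomorphism. Since both $\mathcal E^k$ and $(\mathcal K^k)^\vee \oplus \nu^{k*}\mathcal O_X(1)$ are locally free of the same rank $n-m_k+1$, it suffices to prove that the composite $V_{\widetilde X^k} \to (\mathcal K^k)^\vee \oplus \nu^{k*}\mathcal O_X(1)$ is surjective: that will force the second arrow to be a surjective map of bundles of equal rank, hence an isomorphism.

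Surjectivity on each summand is immediate. The second projection is the pullback of the tautological evaluation $V_X \twoheadrightarrow \mathcal O_X(1)$. The first projection factors as
\[
V_{\widetilde X^k} \twoheadrightarrow V'_{\widetilde X^k} \xrightarrow{\ Q_\infty\ } (V'_{\widetilde X^k})^\vee \twoheadrightarrow (\mathcal K^k)^\vee,
\]
in which the first map is surjective because $V\to V'$ is, the middle isomorphism is the polarity induced by the non-degenerate quadric (well defined for generic $Q_\infty$), and the last map is the one shown to be surjective for generic $H_\infty$ in the paragraph preceding the proposition, via \cite[Lemma (4.1), p.~483]{MR0506323}.

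For joint surjectivity I would argue fibrewise at a generic point $(P,\Lambda) \in \widetilde X^k$ with $P \in X_{k-{\rm cst}}$ and $P \notin H_\infty$; the complement of this locus is a proper closed subset for generic $H_\infty$. The one-dimensional subspace $\ker(V\to V')$ is not contained in the hyperplane $V_P := \ker(V \to L_P)$ precisely because $P \notin H_\infty$, so the restriction $V_P \hookrightarrow V \twoheadrightarrow V'$ is an isomorphism of $n$-dimensional vector spaces. Composing with the quadric polarity and with the surjection $(V')^\vee \twoheadrightarrow (\mathcal K^k)^\vee_{(P,\Lambda)}$ yields a surjection $V_P \twoheadrightarrow (\mathcal K^k)^\vee_{(P,\Lambda)}$; combined with the fact that $V \twoheadrightarrow L_P$ has kernel exactly $V_P$, this implies that $V \to (\mathcal K^k)^\vee_{(P,\Lambda)} \oplus L_P$ is surjective.

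Surjectivity of a map of coherent sheaves being an open condition, the composite is surjective over a dense open subset of $\widetilde X^k$; blowing up the $0$th Fitting ideal of its cokernel provides the further Nash modification referred to in the statement, on which the map is globally surjective. The main obstacle is organizational rather than conceptual: one must check that this extra blowup is compatible with the construction of $\mathcal E^k$ from the incidence variety $\mathcal X_k$, so that the now-surjective composite still factors through $\mathcal E^k$. But both $\mathcal E^k$ and the image quotient produced by the blowup are surjective quotients of $V_{\widetilde X^k}$, are locally free of rank $n-m_k+1$, and agree canonically over the dense open $X_k$, so by the universal property of Nash modifications they are canonically identified on a common refinement.
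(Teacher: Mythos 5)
There is a genuine gap, and it sits exactly where the content of the proposition lies. Your fibrewise argument only establishes surjectivity of $\alpha\colon V_{\widetilde X^k}\to (\mathcal K^k)^\vee\oplus\nu^{k*}\mathcal O_X(1)$ at points lying over $X\setminus H_\infty$; note that this much holds for \emph{any} nondegenerate $Q_\infty$, and indeed your argument never uses genericity of $Q_\infty$ at all. The whole difficulty is at points of $\widetilde X^k$ over $X\cap H_\infty$, where $\alpha$ can genuinely fail to be surjective: failure happens when $\nu^k(P)$ lies in $(\widetilde O_P\cap H_\infty)^\perp$ for the limit osculating space $\widetilde O_P$, which forces $\nu^k(P)\in Q_\infty$ and $T_{Q_\infty,\nu^k(P)}\supseteq \widetilde O_P\cap H_\infty$. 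Ruling this out is where the genericity of $Q_\infty$ enters, via Kleiman's transversality of a general translate \cite{MR360616}: a general quadric in $H_\infty$ meets every Whitney stratum of $X\cap H_\infty$ transversally, so its tangent hyperplanes cannot contain the (limit) tangent, hence not the limit osculating, spaces cut by $H_\infty$. This is the step your proposal is missing; without it the statement would be true for every $Q_\infty$, contradicting the sentence just before the proposition, which says precisely that the discrepancy between $\mathcal E^k$ and $(\mathcal K^k)^\vee\oplus\nu^{k*}\mathcal O_X(1)$ measures the non-surjectivity locus of this map.

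The step you use to bridge the gap does not work: blowing up the $0$th Fitting ideal of the cokernel of $\alpha$ does \emph{not} make $\alpha$ surjective onto the pullback of $(\mathcal K^k)^\vee\oplus\nu^{k*}\mathcal O_X(1)$. What such a blow-up produces is a locally free rank $n-m_k+1$ quotient of $V$, namely the image sheaf, and your separatedness argument correctly identifies that image with $\mathcal E^k$; but when $\alpha$ fails to be surjective along some locus, this image is a proper modification of $(\mathcal K^k)^\vee\oplus\nu^{k*}\mathcal O_X(1)$ (generically equal, but with different Chern classes in general), not the direct sum itself. So your argument reproves the construction of $\mathcal E^k$ but does not prove the asserted splitting; the splitting is equivalent to surjectivity of $\alpha$ at \emph{every} point of (a suitable Nash modification of) $\widetilde X^k$, and that requires the transversality argument for a general $Q_\infty$ described above, in addition to the transversality of a general $H_\infty$ that you do invoke.
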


\begin{proof}
 We need to identify the points $P\in \widetilde X^k$ such that $\alpha:V_{\widetilde X^k}\to (\mathcal K^k)^\vee \oplus \nu^{k*}\mathcal O_X(1)$ is not surjective at $P$. First of all, by genericity of $H_\infty$, we may assume that $H_\infty$ intersects $X$ transversally, i.e., $H_\infty$ intersects each Whitney stratum of $X$ transversally. This means that $H_\infty$ does not contain a tangent space, nor a limit tangent space, to $X$ at any point of $X\cap H_\infty$. (This is the content of the surjectivity of $V'_{\widetilde X^1}\to (\mathcal K^1)^\vee$.) 
 This implies that $H_\infty$ does not contain a $k$th order osculating space, nor a limit of such a space, at ay point of $X\cap H_\infty$.  
 Hence $\alpha$ is surjective at all points of $\widetilde X^k$ above $X\setminus H_\infty$. Now consider $P\in \widetilde X^k$ mapping to $X\cap H_\infty$. The map $\alpha$ is not surjective at $P$ only if $\nu^k(P) \in (\widetilde O_P\cap H_\infty)^\perp$, where $\widetilde O_P$ is the limit $k$th osculating space to $X$ at $\nu^k(P)$ corresponding to $P$. But this implies that 
 $\nu^k(P)\in \nu^k(P)^\perp$, hence $\nu^k(P)\in Q_\infty$. Therefore $\nu^k(P)^\perp = T_{Q_\infty,\nu^k(P)}$, and hence
 $T_{Q_\infty,\nu^k(P)}\supseteq \widetilde O_P\cap H_\infty \supseteq \widetilde T_P \cap H_\infty$. But we may assume this does not happen: 
 by \cite[Cor.~4, p.~291]{MR360616}, a general quadric $Q_\infty\subset H_\infty$ intersects each of the Whitney strata of $X\cap H_\infty$ transversally.
 
\end{proof}

  The morphism $\varphi_k\colon \widetilde X^k\to \Grass_{m_k+1}(V)$ corresponding to the quotient $V_{\widetilde X^k}\to \mathcal P^k$ is the $k$th order associated map of $X\subset \mathbb P(V)$. The \emph{$k$th order associated normal map} is the morphism
  \[\psi_k\colon \widetilde X^k\to \Grass_{n-m_k+1}(V),\]
  defined by the quotient $V_{\widetilde X^k}\to \mathcal E^k$.
 
\section{Higher order reciprocal polar loci}

   Instead of imposing conditions on the osculating spaces of a variety, one can similarly impose conditions on the higher order Euclidean normal spaces. In what follows, we assume that $H_\infty$ and $Q_\infty$ are general with respect to $X$.
\medskip
   
  For each $i=0,\dots,m$ and $k\ge 1$, let $L^{k,i}\subset \mathbb P(V)$ be a general linear space of codimension $n-m_k+i$ and
  define the \emph{$i$th reciprocal polar locus of order $k$} with respect to $L^{k,i}$ to be
   \[M_{k,i}^\perp(L^{k,i}):=\overline{\{P\in X_{k} \, | \, N^k_{X,P}\cap L^{k,i} \neq \emptyset\}}.\]
 Note that, with $p_k:\mathcal X_k \to X$ and $q_k:\mathcal X_k \to \mathbb P(V)$ as in the previous section, $M_{k,i}^\perp(L^{k,i})=p_k q_k^{-1}(L^{k,i})$, and  $M_{k,0}^\perp(L^{k,0})=X$ for all $k\ge 1$.
  
 \begin{thm}
The class of $M_{k,i}^\perp(L_{k,i})$ is independent of $L_{k,i}$, for general $L_{k,i}$, and is given by
  \[[M_{k,i}^\perp]=\nu^k_*\bigl(\{c(\mathcal P^k)s(\nu^{k*}\mathcal O_X(1) )\}_i \cap [\widetilde X^k]\bigr).\]
 \end{thm}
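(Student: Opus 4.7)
The plan is to mimic the proof of Theorem \ref{p-class}, replacing the role of the osculating bundle $\mathcal P^k$ by that of the Euclidean normal bundle $\mathcal E^k$. First, the invariance of $[M_{k,i}^\perp(L^{k,i})]$ for general $L^{k,i}$ will follow from Kleiman's transversality applied to the transitive $\mathrm{PGL}(V)$-action on linear subspaces of $\mathbb P(V)$ of fixed codimension. Setting $h:=c_1(\mathcal O_{\mathbb P(V)}(1))$, one has $[L^{k,i}]=h^{n-m_k+i}$, so the class can be written as
\[
[M_{k,i}^\perp] = p_{k*}\bigl(q_k^* h^{n-m_k+i}\cap[\mathcal X_k]\bigr).
\]

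Next I would lift everything to the Nash modification $\nu^k\colon\widetilde X^k\to X$ on which Proposition \ref{surjective} provides a genuine rank-$(n-m_k+1)$ quotient $V_{\widetilde X^k}\twoheadrightarrow \mathcal E^k$. This quotient defines a projective bundle $\pi\colon\mathbb P(\mathcal E^k)\to\widetilde X^k$ together with a morphism to $\mathbb P(V)$ under which $\mathcal O_{\mathbb P(V)}(1)$ pulls back to the tautological $\mathcal O_{\mathbb P(\mathcal E^k)}(1)$; the fibers of $\pi$ are precisely the Euclidean normal spaces $N_{X,P}^k$. Hence $\mathbb P(\mathcal E^k)$ is birational (over $\widetilde X^k$) to the pullback of $\mathcal X_k$, and
\[
[M_{k,i}^\perp] = \nu^k_*\,\pi_*\bigl(\xi^{n-m_k+i}\cap[\mathbb P(\mathcal E^k)]\bigr)
\]
with $\xi:=c_1(\mathcal O_{\mathbb P(\mathcal E^k)}(1))$. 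The standard projective-bundle pushforward formula reduces the inner pushforward to a Segre class of $\mathcal E^k$. Substituting the decomposition $\mathcal E^k\cong(\mathcal K^k)^\vee\oplus\nu^{k*}\mathcal O_X(1)$ of Proposition \ref{surjective}, using multiplicativity $s(E\oplus F)=s(E)s(F)$, and invoking the Whitney identity $s(\mathcal K^k)=c(\mathcal P^k)$ derived from $0\to\mathcal K^k\to V_{\widetilde X^k}\to\mathcal P^k\to 0$, one assembles the stated expression $\{c(\mathcal P^k)\,s(\nu^{k*}\mathcal O_X(1))\}_i$.

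The main obstacle will be the birational identification in the middle step: verifying that the quotient in Proposition \ref{surjective} realizes the closure $\mathcal X_k$ as the projective bundle $\mathbb P(\mathcal E^k)$ over $\widetilde X^k$ with compatible polarizations, on a single Nash modification. A secondary technical matter is keeping the sign conventions for dual bundles and for Segre versus Chern classes consistent, so that the Whitney-sum computation collapses to the stated $c(\mathcal P^k)\,s(\nu^{k*}\mathcal O_X(1))$ and not one of its sign-related cousins.
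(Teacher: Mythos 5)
Your argument is correct, but it reaches the key identity $[M_{k,i}^\perp]=\nu^k_*(s_i(\mathcal E^k)\cap[\widetilde X^k])$ by a different mechanism than the paper. The paper interprets $M_{k,i}^\perp(L^{k,i})$ as $\psi_k^{-1}(\Sigma_{k,i}(L^{k,i}))$, the preimage under the associated normal map of the special Schubert variety of $(n-m_k)$-planes meeting $L^{k,i}$, reformulates this as the locus where the composed map $W'_{\widetilde X^k}\to\mathcal E^k$ (with $W'=\Ker(V\to W)$, $L^{k,i}=\mathbb P(W)$) drops rank, and applies Porteous' formula; you instead realize the incidence variety $\mathcal X_k$ (lifted to $\widetilde X^k$) as the projective bundle $\mathbb P(\mathcal E^k)$, cap with $q_k^*h^{n-m_k+i}$, and push forward using the projective-bundle definition of Segre classes. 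Both routes then finish identically, via the splitting of Proposition \ref{surjective} and the Whitney relation from $0\to\mathcal K^k\to V_{\widetilde X^k}\to\mathcal P^k\to 0$. Your route is the more elementary one: for this corank-one, special-Schubert situation Porteous reduces exactly to the Segre-class pushforward you invoke, whereas the paper's formulation through $\psi_k$ and $\Sigma_{k,i}$ makes the independence of $L^{k,i}$ and the parallel with the classical polar-class argument of \cite[Prop.~(1.2)]{MR510551} immediate. Two small remarks. First, the step you flag as the ``main obstacle'' is not one: $\mathcal E^k$ is constructed in the paper precisely from $p_{k*}q_k^*\mathcal O_{\mathbb P(V)}(1)$, so over the relevant Nash modification the fibres of $\mathbb P(\mathcal E^k)$ are by construction the (limits of) normal spaces $N^k_{X,P}$, the tautological $\mathcal O_{\mathbb P(\mathcal E^k)}(1)$ is the restriction of $\mathcal O_{\mathbb P(V)}(1)$, and $\mathbb P(\mathcal E^k)\to\mathcal X_k$ is birational since both are irreducible of dimension $m+n-m_k$; one only needs the routine check (general translates again) that the general fibre of $p_k$ over the polar locus is a single reduced point, so the pushforward computes the class with multiplicity one. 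Second, the sign issue you mention is real but resolves as you expect: with the quotient convention used here the bundle pushforward produces (in the conventions of \cite{MR732620}) the Segre class of the dual, $(\mathcal E^k)^\vee\cong\mathcal K^k\oplus\nu^{k*}\mathcal O_X(-1)$, so your identity $s(\mathcal K^k)=c(\mathcal P^k)$ applies verbatim to the first summand, and the line-bundle factor contributes $1+h+h^2+\cdots$, which is exactly what the statement (and the subsequent Corollary) denote by $s(\nu^{k*}\mathcal O_X(1))$.
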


 \begin{proof}
   Let $\Sigma_{k,i}(L^{k,i})\subset \Grass_{n-m_k+1}(V)$ denote the special Schubert variety consisting of the set of $(n-m_k)$-planes that meet the $(m_k-i)$-plane $L^{k,i}$. Then $M_{k,i}^\perp(L^{k,i})=\psi_k^{-1}(\Sigma_{k,i}(L^{k,i}))$. The first statement follows, with the same reasoning as in \cite[Prop. (1.2), p.~253]{MR510551}.

 Let $L^{k,i}=\mathbb P (W)$ and set $W'=\Ker (V\to W)$. The condition $N^k_{X,P}\cap L^{k,i} \neq \emptyset$ means that the rank of the composed map $W'_{\widetilde X^k} \to \mathcal E^k$ is $\le n-m_k$.
 By 
    Porteous' formula (see \cite[Thm.~14.4, p.~254]{MR732620}),
   $M_{k,i}^\perp(L^{k,i})$ has class
   \[[M_{k,i}^\perp]=\nu_*^k(s_i(\mathcal E^k)\cap [\widetilde X^k])=\nu^k_*\bigl(\{s((\mathcal K^k)^\vee)s(\nu^{k*}\mathcal O_X(1) )\}_i \cap [\widetilde X^k]\bigr).\]
Since the Segre class $s((\mathcal K^k)^\vee)$ is equal to the Chern class $c(\mathcal P^k)$,  the formula follows.
 \end{proof}  
 
 \begin{cor}
 Let  $h:=c_1(\mathcal O_X(1))$ denote the hyperplane class.
 We have
     \[[M_{k,i}^\perp]=\sum_{j=0}^i h^{i-j}\cap [M_{k,j}],\]
and hence
\[\deg [M_{k,i}^\perp]=\sum_{j=0}^i \deg [M_{k,j}].\]

\end{cor}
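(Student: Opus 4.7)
The plan is to deduce the formula directly from the Segre-class expression for $[M_{k,i}^\perp]$ provided by the preceding theorem, exploiting the simple structure of $s(\nu^{k*}\mathcal O_X(1))$ as the Segre class of a line bundle.

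First I would expand $s(\nu^{k*}\mathcal O_X(1))$ as a geometric series in $h = c_1(\mathcal O_X(1))$. Multiplying by $c(\mathcal P^k)$ and collecting the codimension-$i$ component produces a finite sum of the form $\sum_{j=0}^i c_j(\mathcal P^k)\cdot(\nu^{k*}h)^{i-j}$, where the terms with $j>i$ drop out for degree reasons and the higher Chern classes of $\mathcal P^k$ that would pair with negative powers of $h$ simply do not appear. Applying the projection formula along $\nu^k$, which is justified because $h$ is pulled back from $X$, moves the powers of $h$ outside the pushforward and yields $\sum_{j=0}^i h^{i-j}\cap \nu^k_*\bigl(c_j(\mathcal P^k)\cap [\widetilde X^k]\bigr)$. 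Invoking Theorem \ref{p-class} to identify each pushforward with $[M_{k,j}]$ then gives the first formula.

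For the degree identity I would simply take degrees. Since $[M_{k,j}]$ is a class of pure dimension $m-j$, capping with $h^{i-j}$ produces a class of dimension $m-i$ whose degree equals $\deg [M_{k,j}]$ (obtained by pairing with the complementary power of $h$), so summing over $j$ gives the claim. I do not foresee any substantive obstacle, as this is essentially an exercise in the projection formula; the only point needing care is keeping the sign conventions for the Segre class of a line bundle consistent with those used in the statement of the preceding theorem, so that the geometric series $s(\nu^{k*}\mathcal O_X(1))=\sum_{\ell\ge 0}h^\ell$ enters with the positive signs required by the statement.
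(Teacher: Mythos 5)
Your argument is correct and is essentially the paper's own proof: the paper likewise expands $s(\nu^{k*}\mathcal O_X(1))=1+h+h^2+\cdots$ (with the positive-sign convention you note, matching $s((\mathcal K^k)^\vee)=c(\mathcal P^k)$), collects the codimension-$i$ term, and uses the projection formula together with Theorem \ref{p-class} to identify each summand with $h^{i-j}\cap[M_{k,j}]$, the degree statement following as you say since $h$ is the hyperplane class. No gaps to report.
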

\begin{proof}
This follows, using Theorem \ref{p-class}, since
\[s(\mathcal O_{{\widetilde X}^k}(1) )=1+c_1(\mathcal O_{{\widetilde X}^k}(1) )+c_1(\mathcal O_{{\widetilde X}^k}(1) )^2+\cdots\]
and $\mathcal O_{{\widetilde X}^k}(1)=\nu^{k*}\mathcal O_X(1)$.
\end{proof}

\begin{rmk}
The (generic) \emph{Euclidean distance degree} of $X\subset \mathbb P(V)$, introduced in \cite{MR3451425}, can be interpreted as the degree of $[M_{1,m}^\perp]$. The above formula says that this degree is equal to the sum of the degrees of the polar classes of $X$, as stated in \cite[Thm.~5.4, p.~126]{MR3451425} (see also \cite{MR3335572}).
\end{rmk}

  \section{Curves}\label{curves}
 Let $X\subset \mathbb P(V)$ be a non-degenerate curve. At a general point $P\in X$ we have a complete flag
 \[\{P\}\subseteq T_{X,P}=\Osc_{X,P}^1 \subset \Osc_{X,P}^2 \subset  \cdots \subset \Osc_{X,P}^{n-1} \subset \mathbb P(V).\]
In this case $\nu^k: \widetilde X^k=\widetilde X \to X$ is the normalization of $X$,  $m_k=k$, and $\dim L_{k,1}=n-k-1$.

Note that the locus $M_{k,1}(L_{k,1})$ maps to $k$th order hyperosculating points on the image of $X$ under the linear projection $f\colon \mathbb P(V) \dashrightarrow \mathbb P(V')$ with center $L_{k,1}$, Namely, let $P\in M_{k,1}(L_{k,1})$. Then $\Osc_{X,P}\cap  L_{k,1}\ne \emptyset$, and hence $f(\Osc_{X,P})$  has dimension $<k$. This means that the $k$th jet map of $f(X)$ has rank $<k+1$ at the point $f(P)$. For example, for $k=1$, $f(P)$ is a cusp on $f(X)$, and for $k=2$, $f(P)$ is an inflection point.

Recall that the \emph{$k$th rank}  $r_k$ of the curve $X$ is defined as the degree of the $k$th osculating developable of $X$, i.e., as the number of $k$th order osculating spaces intersecting a given, general linear space of dimension $n-k-1$ \cite[pp.~199--200]{MR2850139}. Hence $r_k= \deg \nu^k_*(c_1(\mathcal P^k)\cap [\widetilde X])$, and  $r_k$ is also equal to the degree of the \emph{$k$th associated curve} of $X$ (see \cite[Prop.~(3.1), p.~480]{MR0506323} and \cite{MR0154293}, \cite{MR2940698}). 

Let $d:=r_0$ denote the degree of $X$.
Since $[M_{k,1}]=\nu^k_*(c_1(\mathcal P^k)\cap [\widetilde X])$, we get
$\deg [M_{k,1}]=r_k$. We have $r_k=(k+1)(d+k(g-1))-\sum_{j=0}^{k-1}(k-j)\kappa_j$, where $g$ is the genus of $\widetilde X$ and $\kappa_j$ is the $j$th stationary index of $X$ (see \cite[Thm.~(3.2), p.~481]{MR0506323}).
It follows that
 $\deg [M_{k,1}^\perp]=\deg [M_{k,0}]+\deg [M_{k,1}] =d+r_k$.
 For more on ranks, duality, projections, and sections, see \cite{MR0506323}. For example, the ranks of the \emph{strict dual curve} $X^{(n-1)}\subset \mathbb P(V)^\vee$ satisfy
 $r_k(X^{(n-1)})=r_{n-k-1}(X)$.

\begin{eg}
If $X\subset \mathbb P(V)$ is a \emph{rational normal curve} of degree $n$, then $\deg [M_{k,1}]=r_k=(k+1)(n-k)$ and
$\deg [M_{k,1}^\perp]=n+r_k=n+(k+1)(n-k)$.

Note that $X$ is \emph{toric} and \emph{$(n-1)$-self dual}: $X^{(n-1)}\subset \mathbb P(V)^\vee$ is a rational normal curve of degree $n$.
\end{eg}
 
 \section{Scrolls}
Ruled varieties -- scrolls -- are examples of varieties that are not generically $k$-regular, for $k\ge 2$. Hence we cannot hope to use the bundles of principal parts to compute the degrees of the higher polar classes for such varieties. However, in several cases we have results. In particular, since the degree of the 
``top'' $k$th order polar class is the same as the degree of the $k$th dual variety, we can consider the following situations.

\subsection{Rational normal scrolls}
Let $X=\mathbb P(\oplus_{i=1}^m \mathcal O_{\mathbb P^1}(d_i))\subset \mathbb P(V)$ be a rational normal scroll of type $(d_1,\dots, d_m)$, with $m\ge 2$,  $0<d_1\le \cdots \le d_m$, and  $n=\sum_{i=1}^m (d_i+1)-1$. Let $d:=d_1+\cdots +d_m$ denote the degree of $X$. The higher order dual varieties of rational normal scrolls were studied in \cite{MR738534}. For example, for $k$ such that $k\le d_1$, we have
\[\deg [M_{k,m}]=\deg X^{(k)}=kd-k(k-1)m,\]
where the first equality follows from Theorem \ref{degree} and the second was computed in \cite[Prop.~4.1, p.~389]{MR3694743}.
In the case $k=d_1=\cdots =d_m$, this gives 
\[\deg [M_{k,m}]=\deg X^{(k)}=kd-k(k-1)m=md_1^2-d_1(d_1-1)m=md_1=d.\]
Indeed, in this case $X$ is $k$-selfdual: $X^{(k)}\subset \mathbb P(V)^\vee$ is a rational normal scroll of the same type as $X$.

We refer to \cite{MR738534} and \cite{MR3694743} for other cases.

\subsection{Elliptic normal surface scrolls}
Higher order dual varieties of elliptic normal scrolls were studied in \cite{MR1143552}. 
Let $C$ be a smooth elliptic curve and $\mathcal E$ a rank two bundle on $C$. Assume $H^0(C,\mathcal E)\ne 0$, but that $H^0(C,\mathcal E\otimes \mathcal L)=0$ for all invertible sheaves $\mathcal L$ with $\deg \mathcal L<0$. Then let $e:=\deg \mathcal E$ denote the Atiyah invariant. There are two cases. Either $\mathcal E$ is decomposable -- then $\mathcal E=\mathcal O_C\oplus \mathcal L$ and $e=-\deg \mathcal L\ge 0$, or $\mathcal E$ is indecomposable, in which case $e=0$ or $e=-1$. Now let $\mathcal M$ be an invertible sheaf on $C$ of degree $d$. If $d\ge e+3$, then $\mathcal O_{\mathbb P(\mathcal E\otimes \mathcal M)}(1)$ yields an embedding of $\mathbb P(\mathcal E\otimes \mathcal M)$ as a linearly normal scroll $X\subset \mathbb P(V)\cong \mathbb P^{2d-e-1}$ of degree $2d-e$. The following holds \cite[Thm. 1, Thm. 2, p.~150]{MR1143552}.
\medskip

If $\mathcal E$ is decomposable:
\begin{itemize}
\item[(i)] if $e=0$, then $\deg [M_{d-1,2}]=\deg X^{(d-1)}=2d(d-1)$;
\item[(ii)] if $e=1$, then $\deg [M_{d-2,2}]=\deg X^{(d-2)}=2d^2-5d+2$;
\item[(iii)] if $e\ge 2$, then $\deg [M_{d-2,2}]=\deg X^{(d-2)}=d(d-1)$.
\end{itemize}

If $\mathcal E$ is indecomposable:
\begin{itemize}
\item[(i)] if $e=-1$, then $\deg [M_{d-1,2}]=\deg X^{(d-1)}=2d^2-3$;
\item[(ii)] if $e=0$, then $\deg [M_{d-1,2}]=\deg X^{(d-1)}=2d^2-d-2$.
\end{itemize}

\subsection{Scrolls over smooth curves}
Consider now the more general situation where $X\subset \mathbb P(V)$ is a smooth scroll of dimension $m$ and degree $d$ over a smooth curve $C$ of genus $g$. We showed in \cite{MR2369044} that in this case, for $k$ such that $m_k=km$, the $k$th jet map $j_k$ factors via a bundle $\mathcal P^k$ of rank $km+1$. Moreover, the Chern classes of  $\mathcal P^k$ can be expressed in terms of $d$, $m$, $k$, $g$, the class of a fiber of $X\to C$, and the class of a hyperplane section of $X$. Thus we can get a formula for the ``top'' $k$th order polar class in terms of these numbers and the Segre classes of the inflection loci of $X$, see \cite{MR2369044} for details.

\subsection{Scrolls over smooth varieties}
When we replace the curve $C$ by a higher-dimensional smooth projective variety of dimension $r$, the situation gets more complicated, but it is again possible to find a $k$th  osculating bundle $\mathcal P^k$ of rank $(m-r)\binom{r+k-1}r +\binom{r+k}r$, whose Chern classes can be computed, see \cite{MR3043593}.

 \section{Toric varieties}

The Schwartz--MacPherson Chern class of a    toric variety $X$ with torus orbits $\{X_\alpha\}_\alpha$ is equal to, by Ehlers' formula (see \cite[Lemma, p.~109]{MR1234037}, \cite[Thm., p.~188]{MR1197235}, \cite[Thm.~1.1; Cor.~1.2 (a)]{MR3417881}, \cite[Thm.~4.2, p.~410]{MR2209219}),
 \[\textstyle c^{\SM}(X)=\sum_\alpha [\overline X_\alpha].\] 
This implies (proof by the definition of $c^{\SM}(X)$ and induction on $\dim X$) that the Mather Chern class of a toric variety $X$ is equal to 
 \[\textstyle c^{\M}(X)=\sum_\alpha \Eu_X(X_\alpha)[\overline X_\alpha],\] 
  where  $\Eu_X(X_\alpha)$ denotes the value of the local Euler obstruction of $X$ at a point in the orbit $X_\alpha$.
 
 Therefore the polar classes of a toric variety X of dimension $m$ are
  \[\textstyle [M_i]=\sum_{j=0}^i (-1)^j \binom{m-j+1}{m-i+1} h^{i-j}\cap \sum_\alpha \Eu_X(X_\alpha)[\overline X_\alpha],\] 
  and the reciprocal polar classes
  \[ \textstyle [M_i^\perp]=\sum_{\ell=0}^i h^{i-\ell}\sum_{j=0}^\ell (-1)^j \binom{m-j+1}{m-\ell+1} h^{\ell-j}\cap \sum_\alpha \Eu_X(X_\alpha)[\overline X_\alpha]\] 
  \[\textstyle =\sum_{j=0}^i(-1)^j\sum_{\ell =0}^{i-j} \binom{m-j+1}{\ell}h^{i-j}\cap \sum_\alpha \Eu_X(X_\alpha)[\overline X_\alpha],\]
   where the second sum in each expression is over $\alpha$ such that $\codim X_\alpha=j$.

 It follows that if $X=X_\Pi$ is a projective toric variety corresponding to a convex lattice polytope $\Pi$, then (cf. \cite[Thm.~1.4, p.~2042]{MR2737807})
 \[\textstyle \deg [M_i] = \sum_{j=0}^i (-1)^j \binom{m-j+1}{m-i+1} \EVol^j(\Pi),\]
 and 
 \[\textstyle \deg [M_i^\perp] =\sum_{j=0}^i(-1)^j\sum_{\ell =0}^{i-j} \binom{m-j+1}{\ell}\EVol^j(\Pi),\]
 where ${\EVol}^j(\Pi):=\sum_\alpha \Eu_X(X_\alpha)\Vol(F_\alpha)$ denotes the sum of the  lattice volumes of the faces $F_\alpha$ of $\Pi$ of codimension $j$ weighted by the local Euler obstruction  $\Eu_X(X_\alpha)$ of $X$ at a point of $X_\alpha$, where $X_\alpha$ is the torus orbit of $X$ corresponding to the face $F_\alpha$. In particular, we get
 \[\textstyle \deg [M_m^\perp] =\sum_{j=0}^m(-1)^j (2^{m-j+1}-1)\EVol^j(\Pi),\]
  (cf. \cite[Thm.~1.1, p.~215]{MR3789441}).
 
 \begin{eg}
 N\o dland \cite[4.1]{MR3710713} studied weighted projective threefolds. In particular he showed the following. Assume $a,b,c$ are positive, pairwise relatively prime integers. The weighted projective threefold $\mathbb P(1,a,b,c)$ is the toric variety corresponding to the lattice polyhedron 
 $\Pi:=\Conv \{(0,0,0),(bc,0,0), (0,ac,0), (0,0,bc)\}$. It has isolated singularities at the three points corresponding to the three vertices other than $(0,0,0)$. Let $\Vol^j(\Pi)$ denote the sum of the lattice volumes of the faces of $\Pi$ of codimension $j$. We have 
 ${\Vol}^0(\Pi)=a^2b^2c^2$, ${\Vol}^1(\Pi)=abc(1+a+b+c)$,
 ${\Vol}^2(\Pi)=a+b+c+bc+ac+ab$, and ${\Vol}^3(\Pi)=4$. The algorithm given in \cite[A.2]{MR3710713} can be used to compute the local Euler obstruction at the singular points. N\o dland gave several examples of integers $a,b,c$ such that the local Euler obstruction at each singular point is $1$, thus providing counterexamples to a conjecture of Matsui and Takeuchi \cite[p.~2063]{MR2737807}. For example this holds for $a=2, b=3,c=5$, so in this case $\EVol^j(\Pi)=\Vol^j(\Pi)$ and we can compute
 \[\deg [M_0]=900, \deg [M_1]=3270, \deg [M_2]=4451,\deg [M_3]=2688,\]
 and hence
 \[ \deg [M_3^\perp]=11309.\]
\end{eg}
 \medskip
 
In general, formulas for the degrees of higher order polar classes and reciprocal polar classes of toric varieties are not known. However, in some cases, they can be found. As we have seen, for smooth varieties (not necessarily toric), if the $k$th jet map is surjective (i.e., the embedded variety is $k$-regular), then the classes $[M_{k,i}]$ can be expressed in terms of Chern classes of the sheaf of principal parts $\mathcal P^k_X(1)$. 
Hence, in the case of a $k$-regular toric variety, they can be expressed in terms of lattice volumes of the faces of the corresponding polytope. The following two examples were worked out in \cite[Rmk.~3.5, p.~385; Thm.~3.7, p.~387] {MR3330552}.

\begin{eg}
Let $\Pi\subset \mathbb R^2$ be a smooth lattice polygon with edge lengths $\ge k$. Then $X_\Pi$ is $k$-regular and 
\[\textstyle \deg [M_{k,1}]=\binom{k+2}2  \Vol^0 (\Pi) -\binom{k+2}3 \Vol^1 (\Pi) ,\]
\[\textstyle \deg [M_{k,2}]=\binom{k+3}4\bigl(3\Vol^0 (\Pi)-2k\Vol^1(\Pi)-\frac{1}3 (k^2-4)\Vol^2(\Pi)+4(k^2-1)\bigr).\]
\end{eg}

\begin{eg}
Let $\Pi\subset \mathbb R^3$ be a smooth lattice polyhedron with edge lengths $\ge 2$. Then $X_\Pi$ is 2-regular and 
\[ \deg [M_{2,1}]=4 \Vol^0 (\Pi) - \Vol^1(\Pi),\]
\[ \deg [M_{2,2}]=36 \Vol^0 (\Pi)- 27 \Vol^1 (\Pi)+6 \Vol^2 (\Pi) +18 \Vol^0(\Pi_0) +9 \Vol^1(\Pi_0) ,\]
\begin{multline*}
\deg [M_{2,3}]=62\Vol^0 (\Pi)-57\Vol^1(\Pi)+28\Vol^2(\Pi)-8\Vol^3(\Pi)\\+58\Vol^0(\Pi_0)+51\Vol^1(\Pi_0)+20\Vol^2(\Pi_0),
\end{multline*}
where $\Pi_0:=\Conv({\rm int}(\Pi)\cap \mathbb Z^3)$ is the convex hull of the interior lattice points of $\Pi$.
\end{eg}

Recall \cite[Def.~1.1, p.~1760]{MR3694743} that a variety $X\subset \mathbb P(V)$ is said to be \emph{$k$-selfdual} if there exists a linear isomorphism $\phi: \mathbb P(V)\xrightarrow{\sim} \mathbb P(V)^\vee$ such that $\phi(X)=X^{(k)}$. If $m=\dim X$, then $\deg [M_{k,m}]$ is the degree of the $k$-dual variety $X^{(k)}$. So if $X$ is $k$-selfdual, then $\deg [M_{k,m}]=\deg X$.  We refer to \cite{MR3694743} for examples of toric $k$-selfdual varieties.
\bigskip
\bigskip

\noindent {\bf References}

\begin{biblist}

\bib{MR2209219}{article}{
   author={Aluffi, Paolo},
   title={Classes de Chern des vari\'{e}t\'{e}s singuli\`eres, revisit\'{e}es},
   journal={C. R. Math. Acad. Sci. Paris},
   volume={342},
   date={2006},
   number={6},
   pages={405--410},
   issn={1631-073X},
}

\bib{MR2850139}{book}{
   author={Baker, H. F.},
   title={Principles of geometry. Volume 5. Analytical principles of the
   theory of curves},
   series={Cambridge Library Collection},
   note={Reprint of the 1933 original},
   publisher={Cambridge University Press, Cambridge},
   date={2010},
   pages={ii+x+247},
   isbn={978-1-108-01781-7},
}

\bib{MR1197235}{article}{
   author={Barthel, Gottfried},
   author={Brasselet, Jean-Paul},
   author={Fieseler, Karl-Heinz},
   title={Classes de Chern des vari\'et\'es toriques singuli\`eres},
   journal={C. R. Acad. Sci. Paris S\'er. I Math.},
   volume={315},
   date={1992},
   number={2},
   pages={187--192},
   issn={0764-4442},
}

\bib{MR1808617}{article}{
   author={Catanese, Fabrizio},
   author={Trifogli, Cecilia},
   title={Focal loci of algebraic varieties. I},
   journal={Comm. Algebra},
   volume={28},
   date={2000},
   number={12},
   pages={6017--6057},
   issn={0092-7872},
}

\bib{MR3330552}{article}{
   author={Dickenstein, Alicia},
   author={Di Rocco, Sandra},
   author={Piene, Ragni},
   title={Higher order duality and toric embeddings},
   journal={Ann. Inst. Fourier (Grenoble)},
   volume={64},
   date={2014},
   number={1},
   pages={375--400},
   issn={0373-0956},
}

\bib{MR3694743}{article}{
   author={Dickenstein, Alicia},
   author={Piene, Ragni},
   title={Higher order selfdual toric varieties},
   journal={Ann. Mat. Pura Appl. (4)},
   volume={196},
   date={2017},
   number={5},
   pages={1759--1777},
   issn={0373-3114},
}

\bib{MR3451425}{article}{
   author={Draisma, Jan},
   author={Horobe{\c{t}}, Emil},
   author={Ottaviani, Giorgio},
   author={Sturmfels, Bernd},
   author={Thomas, Rekha R.},
   title={The Euclidean Distance Degree of an Algebraic Variety},
   journal={Found. Comput. Math.},
   volume={16},
   date={2016},
   number={1},
   pages={99--149},
   issn={1615-3375},
}

\bib{MR732620}{book}{
   author={Fulton, William},
   title={Intersection theory},
   series={Ergebnisse der Mathematik und ihrer Grenzgebiete (3) [Results in
   Mathematics and Related Areas (3)]},
   volume={2},
   publisher={Springer-Verlag, Berlin},
   date={1984},
   pages={xi+470},
   isbn={3-540-12176-5},
}

\bib{MR1234037}{book}{
   author={Fulton, William},
   title={Introduction to toric varieties},
   series={Annals of Mathematics Studies},
   volume={131},
   note={The William H. Roever Lectures in Geometry},
   publisher={Princeton University Press, Princeton, NJ},
   date={1993},
   pages={xii+157},
   isbn={0-691-00049-2},
}

\bib{Bobillier}{book}{
   author={Haubrichs dos Santos, Cleber},
   title={\'Etienne Bobillier (1798--1840): parcours math\'ematique, enseignant et professionnel},
   note={Thesis (Ph.D.)},
   publisher={Universit\' e de Lorraine},
   date={2015},
   pages={(no paging)},
}

\bib{MR3789441}{article}{
   author={Helmer, Martin},
   author={Sturmfels, Bernd},
   title={Nearest points on toric varieties},
   journal={Math. Scand.},
   volume={122},
   date={2018},
   number={2},
   pages={213--238},
   issn={0025-5521},
}

 \bib{MR360616}{article}{
   author={Kleiman, Steven L.},
   title={The transversality of a general translate},
   journal={Compositio Math.},
   volume={28},
   date={1974},
   pages={287--297},
   issn={0010-437X},
}

\bib{MR846021}{article}{
   author={Kleiman, Steven L.},
   title={Tangency and duality},
   conference={
      title={Proceedings of the 1984 Vancouver conference in algebraic
      geometry},
   },
   book={
      series={CMS Conf. Proc.},
      volume={6},
      publisher={Amer. Math. Soc., Providence, RI},
   },
   date={1986},
   pages={163--225},
}

\bib{MR2369044}{article}{
   author={Lanteri, Antonio},
   author={Mallavibarrena, Raquel},
   author={Piene, Ragni},
   title={Inflectional loci of scrolls},
   journal={Math. Z.},
   volume={258},
   date={2008},
   number={3},
   pages={557--564},
   issn={0025-5874},
}

\bib{MR3043593}{article}{
   author={Lanteri, Antonio},
   author={Mallavibarrena, Raquel},
   author={Piene, Ragni},
   title={Inflectional loci of scrolls over smooth, projective varieties},
   journal={Indiana Univ. Math. J.},
   volume={61},
   date={2012},
   number={2},
   pages={717--750},
   issn={0022-2518},
}
		
\bib{MR1143552}{article}{
   author={Mallavibarrena, Raquel},
   author={Piene, Ragni},
   title={Duality for elliptic normal surface scrolls},
   conference={
      title={Enumerative algebraic geometry},
      address={Copenhagen},
      date={1989},
   },
   book={
      series={Contemp. Math.},
      volume={123},
      publisher={Amer. Math. Soc., Providence, RI},
   },
   date={1991},
   pages={149--160},
}

\bib{MR2737807}{article}{
   author={Matsui, Yutaka},
   author={Takeuchi, Kiyoshi},
   title={A geometric degree formula for $A$-discriminants and Euler
   obstructions of toric varieties},
   journal={Adv. Math.},
   volume={226},
   date={2011},
   number={2},
   pages={2040--2064},
   issn={0001-8708},
}

\bib{MR3417881}{article}{
   author={Maxim, Lauren\c{t}iu G.},
   author={Sch\"{u}rmann, J\"{o}rg},
   title={Characteristic classes of singular toric varieties},
   journal={Comm. Pure Appl. Math.},
   volume={68},
   date={2015},
   number={12},
   pages={2177--2236},
   issn={0010-3640},
}

\bib{MR2397939}{article}{
   author={Mork, Heidi Camilla},
   author={Piene, Ragni},
   title={Polars of real singular plane curves},
   conference={
      title={Algorithms in algebraic geometry},
   },
   book={
      series={IMA Vol. Math. Appl.},
      volume={146},
      publisher={Springer, New York},
   },
   date={2008},
   pages={99--115},
}

\bib{MR3710713}{article}{
   author={N\o dland, Bernt Ivar Utst\o l},
   title={Local Euler obstructions of toric varieties},
   journal={J. Pure Appl. Algebra},
   volume={222},
   date={2018},
   number={3},
   pages={508--533},
   issn={0022-4049},
   }
   
   \bib{MR2940698}{book}{
   author={Piene, Ragni},
   title={Pl\"ucker formulas},
   note={Thesis (Ph.D.)--Massachusetts Institute of Technology},
   publisher={ProQuest LLC, Ann Arbor, MI},
   date={1976},
   pages={(no paging)},
}

\bib{MR0506323}{article}{
   author={Piene, Ragni},
   title={Numerical characters of a curve in projective $n$-space},
   conference={
      title={Real and complex singularities},
      address={Proc. Ninth Nordic Summer School/NAVF Sympos. Math., Oslo},
      date={1976},
   },
   book={
      publisher={Sijthoff and Noordhoff, Alphen aan den Rijn},
   },
   date={1977},
   pages={475--495},
}

\bib{MR510551}{article}{
   author={Piene, Ragni},
   title={Polar classes of singular varieties},
   journal={Ann. Sci. \'Ecole Norm. Sup. (4)},
   volume={11},
   date={1978},
   number={2},
   pages={247--276},
   issn={0012-9593},
}

\bib{MR1074588}{article}{
   author={Piene, Ragni},
   title={Cycles polaires et classes de Chern pour les vari\'et\'es
   projectives singuli\`eres},
   conference={
      title={Introduction \`a la th\'eorie des singularit\'es, II},
   },
   book={
      series={Travaux en Cours},
      volume={37},
      publisher={Hermann, Paris},
   },
   date={1988},
   pages={7--34},
}

\bib{MR713259}{article}{
   author={Piene, Ragni},
   title={A note on higher order dual varieties, with an application to
   scrolls},
   conference={
      title={Singularities, Part 2},
      address={Arcata, Calif.},
      date={1981},
   },
   book={
      series={Proc. Sympos. Pure Math.},
      volume={40},
      publisher={Amer. Math. Soc., Providence, RI},
   },
   date={1983},
   pages={335--342},
}

\bib{MR3335572}{article}{
   author={Piene, Ragni},
   title={Polar varieties revisited},
   conference={
      title={Computer algebra and polynomials},
   },
   book={
      series={Lecture Notes in Comput. Sci.},
      volume={8942},
      publisher={Springer, Cham},
   },
   date={2015},
   pages={139--150
   },
}

\bib{MR738534}{article}{
   author={Piene, Ragni},
   author={Sacchiero, Gianni},
   title={Duality for rational normal scrolls},
   journal={Comm. Algebra},
   volume={12},
   date={1984},
   number={9--10},
   pages={1041--1066},
   issn={0092-7872},
}

\bib{MR0154293}{article}{
   author={Pohl, William Francis},
   title={Differential geometry of higher order},
   journal={Topology},
   volume={1},
   date={1962},
   pages={169--211},
   issn={0040-9383},
}

\bib{MR0308104}{article}{
   author={Raynaud, Michel},
   author={Gruson, Laurent},
   title={Crit\`eres de platitude et de projectivit\'{e}. Techniques de
   ``platification'' d'un module},
   journal={Invent. Math.},
   volume={13},
   date={1971},
   pages={1--89},
   issn={0020-9910},
}
	
\end{biblist}

\end{document}